\newtheorem{theorem}{Theorem}
\newtheorem{proposition}[theorem]{Proposition}
\newtheorem{corollary}[theorem]{Corollary}
\newtheorem{lemma}[theorem]{Lemma}
\theoremstyle{definition}
\newtheorem{remark}[theorem]{Remark}
\newcommand{\eqnsection}{
\renewcommand{\theequation}{\thesection.\arabic{equation}}
    \makeatletter
    \csname  @addtoreset\endcsname{equation}{section}
    \makeatother}
\def\E{\mathbb{E}}
\def\R{\mathbb{R}}
\def\Pb{\mathbb{P}}
\def\F{\mathcal{F}}
\newcommand{\equi}{\mathop{\sim}\limits}
\def\={{\,\;\mathop{=}\limits^{\text{(law)}}\;\,}}
\def\qed{\hfill$\square$}
\begin{document}

\title[]{Parisian times for linear diffusions}
\author[Christophe Profeta]{Christophe Profeta}

\address{
Universit\'e Paris-Saclay, CNRS, Univ Evry, Laboratoire de Math\'ematiques et Mod\'elisation d'Evry, 91037, Evry-Courcouronnes, France.
 {\em Email} : {\tt christophe.profeta@univ-evry.fr}
  }

\keywords{Diffusion theory ; Excursion theory}

\subjclass[2020]{60J60 ; 60G40}

\begin{abstract} 
We compute the joint distribution of the first times a linear diffusion makes an excursion longer than some given duration above (resp. below) some fixed level.  In the literature, such stopping times have been introduced and studied in the framework of \emph{Parisian} barrier options, mainly in the case of Brownian motion with drift. We also exhibit several independence properties, and provide some formulae for the associated ruin probabilities.
\end{abstract}

\maketitle

\section{Statement of the main results}\label{sec:1}

\subsection{Introduction}

Let $X$ be a regular diffusion, without killing, taking values in some interval $I\subseteq\R$. In this note, we are interested in the following \emph{Parisian} stopping times:
$$\kappa_r^{(a,\pm)} = \inf\{t>r,\;  t-g_t^{(a)} \geq r\text{ and } \pm(X_t-a)\geq0\}$$
where $a\in  \text{Int}(I)$, the interior of $I$, $r>0$ and 
$$g_t^{(a)} =\sup\{s\leq t,\, X_s=a\}.$$

These stopping times are  the first times the process $X$ makes an excursion above $a$ (resp. below $a$) longer than $r>0$. They were introduced by Chesney, Jeanblanc \& Yor \cite{CJY} for the pricing of some barrier options, in which the owner loses the option if the underlying asset $X$ remains long enough below or above some fixed level. Several papers have been devoted to the valuation of such options, but in the case of diffusion processes, to our knowledge, only the example of Brownian motion with drift has been studied in details so far. In the case of two barriers, we refer the reader to Dassios \& Wu \cite{DW2} who  start from a semi-Markov model and use a limiting procedure, and to Anderluh \& van der Weide \cite{AW} and Labart \& Lelong \cite{LL} who use mainly martingales methods.\\

We shall not tackle here the problem of computing Parisian options, but rather study the distribution of the quadruplet $(\kappa_u^{(a, -)}, \kappa_v^{(b, +)},   X_{ \kappa_u^{(a,-)}}, X_{ \kappa_v^{(b,+)}})$ where $a< b$ and $u,v>0$. This distribution is always at the core of all the aforementioned papers. Our approach will essentially rely on diffusion and excursion theory. \\

The paper is divided as follows. In the remainder of Section \ref{sec:1}, we introduce some notation and state our main result, as well as several corollaries and consequences. Section \ref{sec:2} is devoted to some preliminary lemmas, which will be needed for the proof of the main theorem in Section \ref{sec:3}. Finally, several examples are given in Section \ref{sec:4}, including Brownian motion with drift and some Bessel processes.

\subsection{Notations}

We shall work in the framework of Pitman \& Yor \cite{PY}.  Let us denote by $\Pb_x$ and $\E_x$ the probability and expectation of $X$ when starting from $x\in I$, and $(\F_t, t\geq0)$ its natural filtration. We assume that the infinitesimal generator of $X$ has the form
$$\mathcal{G} = \frac{1}{2}\sigma^2(x) \frac{\text{d}^2}{\text{d} x^2} + \mu(x) \frac{\text{d}}{\text{d} x} $$ 
on some appropriate domain, with $\sigma$ a strictly positive and continuous function and $\mu$ a locally bounded function. Recall that $X$ may also be defined by its scale function $s$ and speed measure $m$ which are given by :
\begin{equation}\label{eq:sm}
s^\prime(x) = \exp\left(- \int^x \frac{2\mu(y)}{\sigma^2(y)}dy\right)\qquad \text{ and }\quad m(dx) = \frac{2}{s^\prime(x)\sigma^2(x)}dx.
\end{equation}

\noindent
For $a\in \text{Int}(I)$, we define the first hitting time 
$$T_a = \inf\{t>0,\, X_t =a\}.$$
Its Laplace transform is classically given for $\lambda>0$ by :
$$\E_x\left[e^{-\lambda T_a}\right] = \begin{cases}
\Phi_{\lambda,-}(x)/ \Phi_{\lambda, -}(a) & \text{ if }x\leq a,\\
\Phi_{\lambda, +}(x)/ \Phi_{\lambda, +}(a) & \text{ if }x>a,
\end{cases}$$
where $\Phi_{\lambda,-}$ and $\Phi_{\lambda,+}$ are two solutions (respectively increasing and decreasing) of the ordinary differential equation 
$\displaystyle\mathcal{G} u = \lambda u$, subject to some appropriate boundary conditions. Applying the Markov property and using the continuity of the paths of $X$, we also have for $a\leq x\leq b$ and $\lambda>0$ :
$$\E_x\left[e^{-\lambda T_a}1_{\{T_a<T_b\}}\right] = \frac{W_\lambda^{(b)}(x)}{W_\lambda^{(b)}(a)}
\quad\text{ and }\quad 
\E_x\left[e^{-\lambda T_b}1_{\{T_b<T_a\}}\right] = \frac{W_\lambda^{(a)}(x)}{W_\lambda^{(a)}(b) }
$$
where, for $c\in I$, the function $W_\lambda^{(c)}$ is defined by :
$$W_\lambda^{(c)}(x) = \Phi_{\lambda,+}(x)\Phi_{\lambda,-}(c)-\Phi_{\lambda,-}(x)\Phi_{\lambda,+}(c).$$

\noindent
Recall next from It\^o \& McKean \cite[Section 6.2]{IMK} that $T_a$ admits a continuous probability density function 
and that there exist two L\'evy measures $\nu_+^{(a)}$ and $\nu_-^{(a)}$ such that 
$$\nu_\pm^{(a)}(dt) = \pm\lim_{\varepsilon\downarrow0} \frac{\Pb_{a\pm\varepsilon}(T_a\in dt)}{s(a\pm\varepsilon)-s(a)}= \pm \frac{1}{s^\prime(a)} \frac{\text{d}}{\text{d}x}\Pb_x(T_a\in dt)_{\big|x=a^\pm}.$$
Equivalently, $\nu_+^{(a)}$ and $\nu_-^{(a)}$ may be defined by their (modified) Laplace transforms :
$$\int_{(0,+\infty]}(1-e^{-\lambda t}) \nu^{(a)}_{\pm} (dt) = \mp\frac{1}{s^\prime(a)} \frac{\Phi_{\lambda, \pm}^{\prime}(a)}{\Phi_{\lambda, \pm}(a)}.$$
Some care must be taken when dealing with transient diffusions, as in this case, the measure $\nu^{(a)}_{\pm}$ might have an atom at $t=+\infty$ given by :
$$\nu^{(a)}_{\pm}(\{+\infty\}) =\mp \frac{1}{s^\prime(a)} \lim_{\lambda\downarrow0}\frac{\Phi_{\lambda, \pm}^{\prime}(a)}{\Phi_{\lambda, \pm}(a)}.$$

\noindent
We also introduce the Green kernel of $X$ which is defined by the formula
$$G_\lambda (x,y) = \int_0^{+\infty} e^{-\lambda t}p(t; x,y) dt$$
where $p(t; x,y)$ denotes the transition density of $X$ with respect to the speed measure $m$, i.e.
$$\Pb_x(X_t \in dy) =  p(t; x,y) m( dy).$$
The Green kernel admits the classic representation 
$$G_\lambda (x,y) = \omega_\lambda^{-1} \Phi_{\lambda,-}(x) \Phi_{\lambda,+}(y),\qquad x\leq y,$$
where the Wronskian $\omega_\lambda$ is defined by 
$$\omega_\lambda = \frac{ \Phi_{\lambda, -}^{\prime}(x) \Phi_{\lambda, +}(x)- \Phi_{\lambda, -}(x) \Phi^{\prime}_{\lambda,+}(x)}{s^\prime(x)}$$
and is independent from $x$.\\

\noindent
We finally define the positive and negative meanders of lengths $u$  by :
$$\forall \Lambda_u \in \F_u,\qquad 
\begin{cases}
\displaystyle\Pb^{\uparrow, u}_{a}\left(\Lambda_u\right)= \lim_{\varepsilon\downarrow0} \Pb_{a+\varepsilon}\left(\Lambda_u | T_a>u\right)\\
\vspace{-.3cm}\\
\displaystyle\Pb^{\downarrow, u}_{a}\left(\Lambda_u\right) = \lim_{\varepsilon\downarrow0} \Pb_{a-\varepsilon}\left(\Lambda_u | T_a>u\right).
\end{cases}
$$
The occurrence of meanders in the forthcoming formulae is not surprising as the random variables $X_{ \kappa_u^{(a,-)}}$ and $ X_{ \kappa_v^{(b,+)}}$ are respectively a.s. smaller than $a$ and greater than $b$.\\

\subsection{Main results}

Let $\alpha$ and $\beta$ be two continuous and bounded functions from $I$ to $[0,+\infty)$. For $a\leq x \leq b$ and $\gamma ,\lambda, u,v>0$ let us define the modified Laplace transform of the quadruplet $( \kappa_u^{(a, -)},\kappa_v^{(b, +)},X_{ \kappa_u^{(a,-)}}, X_{ \kappa_v^{(b,+)}} )$  by :
$$
F_{\alpha,\beta,\gamma, \lambda}(x) =  \E_x\left[e^{- \gamma \kappa_u^{(a,-)} -\lambda \kappa_v^{(b,+)} } \alpha\big(X_{ \kappa_u^{(a,-)}}\big) \beta\big( X_{ \kappa_v^{(b,+)}}\big)1_{\{\kappa_v^{(b,+)}\leq \kappa_u^{(a,-)}<+\infty\}}\right]. 
$$
The main result of the paper is the computation of this function. To simplify the statement, let us set :
$$\Psi_{\lambda}^{(\pm)}(a,b,r) = \frac{\Phi_{\lambda, \pm}(a)}{\Phi_{\lambda, \pm}(b)}\pm \frac{W_{\lambda}^{(b)}(a)}{\omega_{\lambda}}  \int_r^{+\infty} e^{-\lambda t} \nu^{(b)}_\pm(dt). $$

\begin{theorem}\label{theo:1}
For $a\leq x \leq b$, the function $F_{\alpha,\beta,\gamma, \lambda}$ admits the expression :
\begin{equation}
\label{eq:Fx}F_{\alpha,\beta,\gamma, \lambda}(x) =\left( \frac{W_{\gamma+\lambda}^{(b)}(x)}{W_{\gamma+\lambda}^{(b)}(a)} \frac{1}{\Psi_{\gamma+\lambda}^{(-)}(b,a,u) }  + \frac{W_{\gamma+\lambda}^{(a)}(x)}{W_{\gamma+\lambda}^{(a)}(b)}  \right)F_{\alpha,\beta,\gamma, \lambda}(b) \end{equation}
where

\begin{multline*}
 F_{\alpha,\beta,\gamma, \lambda}(b)\\=e^{-(\lambda +\gamma) v}\frac{W_{\gamma+\lambda}^{(b)}(a)}{\omega_{\gamma+\lambda}} \frac{ \nu^{(b)}_+[v,+\infty]  \E_b\left[e^{-\gamma\kappa_u^{(a,-)}}\right]
}{\Psi_{\gamma+\lambda}^{(+)}(a,b,v) -\frac{1}{\Psi_{\gamma+\lambda}^{(-)}(b,a,u)}}  \E_a^{\downarrow,u}\left[\alpha \left(X_{u}\right) \right] \E_{b}^{\uparrow,v}\left[   \frac{\Phi_{\gamma,+}(X_v)}{\Phi_{\gamma,+}(b)}\beta\left( X_v\right)\right]
\end{multline*}
and 
$$
\E_b\left[e^{-\gamma \kappa_u^{(a,-)} }\right] =e^{-\gamma u}\frac{\Phi_{\gamma,+}(b)}{\Phi_{\gamma,+}(a)} \frac{\nu^{(a)}_-[u,+\infty]  }{\frac{1}{G_\gamma(a,a)} +\int_u^{+\infty} e^{-\gamma t} \nu_-^{(a)}(dt) }.$$
\end{theorem}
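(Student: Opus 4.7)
My plan combines strong Markov reductions with It\^o excursion theory at the levels $a$ and $b$. The first step is to apply the strong Markov property at $T_a\wedge T_b$; for $x\in[a,b]$ this yields
$$F_{\alpha,\beta,\gamma,\lambda}(x) = \frac{W^{(b)}_{\gamma+\lambda}(x)}{W^{(b)}_{\gamma+\lambda}(a)}F_{\alpha,\beta,\gamma,\lambda}(a) + \frac{W^{(a)}_{\gamma+\lambda}(x)}{W^{(a)}_{\gamma+\lambda}(b)}F_{\alpha,\beta,\gamma,\lambda}(b).$$
On the event $\{\kappa_v^{(b,+)}\leq\kappa_u^{(a,-)}\}$, the process starting from $a$ must reach $b$ before any negative excursion from $a$ of length $\geq u$, so strong Markov at $T_b$ gives $F_{\alpha,\beta,\gamma,\lambda}(a)=C\,F_{\alpha,\beta,\gamma,\lambda}(b)$ with $C=\E_a[e^{-(\gamma+\lambda)T_b}1_{\{T_b<\kappa_u^{(a,-)}\}}]$. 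I would compute $C$ by the standard ``two competing stopping excursions'' formula: its numerator $n^{(a)}_+(e^{-(\gamma+\lambda)T_b(e)}1_{\{\text{reach }b\}})$ equals $\omega_{\gamma+\lambda}/W^{(b)}_{\gamma+\lambda}(a)$---obtained by self-consistency from $\E_a[e^{-\lambda T_b}]=\Phi_{\lambda,-}(a)/\Phi_{\lambda,-}(b)$ after splicing the excursion at $T_b$ with an independent trip from $b$ back to $a$---and its denominator telescopes through $1/G_\lambda(a,a)=\omega_\lambda/(\Phi_{\lambda,-}(a)\Phi_{\lambda,+}(a))$ into $(\omega_{\gamma+\lambda}/W^{(b)}_{\gamma+\lambda}(a))\Psi^{(-)}_{\gamma+\lambda}(b,a,u)$, yielding $C=1/\Psi^{(-)}_{\gamma+\lambda}(b,a,u)$. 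This establishes~(\ref{eq:Fx}).

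The formula for $\E_b[e^{-\gamma\kappa_u^{(a,-)}}]$ follows the same philosophy: strong Markov at $T_a$ contributes $\Phi_{\gamma,+}(b)/\Phi_{\gamma,+}(a)$, and excursion theory at $a$ handles $\E_a[e^{-\gamma\kappa_u^{(a,-)}}]$---the first negative excursion of length $\geq u$ occurs at an exponential local time of parameter $\nu^{(a)}_-[u,+\infty)$, the accumulated time before it is the subordinator of good excursions sampled at that time (whose Laplace exponent telescopes as above), and the extra $e^{-\gamma u}$ accounts for the final $u$ time units inside the triggering excursion. Inserting the reward $\alpha(X_{\kappa_u^{(a,-)}})$ contributes just the factor $\E_a^{\downarrow,u}[\alpha(X_u)]$, since the first $u$ time units of the triggering excursion follow the meander law $\Pb^{\downarrow,u}_a$ independently of the preceding good excursions.

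For $F_{\alpha,\beta,\gamma,\lambda}(b)$, apply strong Markov at $\kappa_v^{(b,+)}$: since $X_{\kappa_v^{(b,+)}}\geq b>a$, the previous paragraph gives $\E_y[e^{-\gamma\kappa_u^{(a,-)}}\alpha(X_{\kappa_u^{(a,-)}})]=(\Phi_{\gamma,+}(y)/\Phi_{\gamma,+}(b))\E_b[e^{-\gamma\kappa_u^{(a,-)}}]\E_a^{\downarrow,u}[\alpha(X_u)]$ for $y\geq b$, so $F_{\alpha,\beta,\gamma,\lambda}(b)=\E_b[e^{-\gamma\kappa_u^{(a,-)}}]\E_a^{\downarrow,u}[\alpha(X_u)]\cdot P$ with
$$P=\E_b\Bigl[e^{-(\gamma+\lambda)\kappa_v^{(b,+)}}\tilde\beta(X_{\kappa_v^{(b,+)}})1_{\{\kappa_v^{(b,+)}\leq\kappa_u^{(a,-)}\}}\Bigr],\quad \tilde\beta(y):=\beta(y)\frac{\Phi_{\gamma,+}(y)}{\Phi_{\gamma,+}(b)}.$$
The same factorization starting from $a$ yields $F_{\alpha,\beta,\gamma,\lambda}(a)=\E_b[e^{-\gamma\kappa_u^{(a,-)}}]\E_a^{\downarrow,u}[\alpha(X_u)]\cdot\tilde P$ with $\tilde P$ the analogous $\E_a$-expectation; combined with the first step this forces $\tilde P=P/\Psi^{(-)}_{\gamma+\lambda}(b,a,u)$. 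Next, splitting $P$ on whether $T_a<\kappa_v^{(b,+)}$ and applying strong Markov at $T_a$ gives the renewal-type equation $P=P_A+\E_b[e^{-(\gamma+\lambda)T_a}1_{\{T_a<\kappa_v^{(b,+)}\}}]\tilde P$, where $P_A$ is the contribution on $\{\kappa_v^{(b,+)}<T_a\}$. Excursion theory at $b$ with competing stops ``positive excursion of length $\geq v$'' and ``negative excursion reaches $a$'', invoking the mirror identity $n^{(b)}_-(e^{-\lambda T_a(e)}1_{\{\text{reach}\}})=\omega_\lambda/W^{(b)}_\lambda(a)$, telescopes the common denominator to $(\omega_{\gamma+\lambda}/W^{(b)}_{\gamma+\lambda}(a))\Psi^{(+)}_{\gamma+\lambda}(a,b,v)$, producing both $\E_b[e^{-(\gamma+\lambda)T_a}1_{\{T_a<\kappa_v^{(b,+)}\}}]=1/\Psi^{(+)}_{\gamma+\lambda}(a,b,v)$ and an explicit $P_A$ (with numerator $e^{-(\gamma+\lambda)v}\nu^{(b)}_+[v,+\infty)\E_b^{\uparrow,v}[\tilde\beta(X_v)]$). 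Substituting $\tilde P=P/\Psi^{(-)}$ and solving the scalar equation produces $P=P_A\Psi^{(+)}\Psi^{(-)}/(\Psi^{(+)}\Psi^{(-)}-1)$; multiplying by the prefactor and simplifying gives the stated formula for $F_{\alpha,\beta,\gamma,\lambda}(b)$, with $\Psi^{(+)}_{\gamma+\lambda}(a,b,v)-1/\Psi^{(-)}_{\gamma+\lambda}(b,a,u)$ appearing in the denominator.

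The main obstacle in both excursion computations is recognizing that the denominator collapses to a clean multiple of $\Psi^{(\pm)}$. This requires combining the Laplace cumulants $\mp\Phi'_{\lambda,\pm}/(s'\Phi_{\lambda,\pm})$, truncated tails $\int_r^{+\infty}e^{-\lambda t}\nu^{(\cdot)}_\pm(dt)$, and the passage quantities $\int e^{-\lambda\zeta}1_{\{\text{reach}\}}n^{(\cdot)}_\pm(de)$ (each evaluated by strong Markov inside an excursion), and then telescoping them via $1/G_\lambda=\omega_\lambda/(\Phi_{\lambda,-}\Phi_{\lambda,+})$: the functions $\Psi^{(\pm)}$ arise precisely as the residue of this algebraic collapse.
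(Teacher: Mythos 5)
Your strong Markov reductions at $T_a\wedge T_b$, at $T_b$ from $a$, and at $\kappa_v^{(b,+)}$ are all correct, and the renewal-type equation $P=P_A+q\,\tilde P$ together with the observation that $\tilde P = P/\Psi^{(-)}_{\gamma+\lambda}(b,a,u)$ does yield the stated formula after elementary algebra. This assembly is essentially the same skeleton as the paper's (which solves a $2\times2$ linear system in $F(a)$ and $F(b)$, and identifies $\E_a\big[e^{-\lambda T_b}1_{\{T_b<\kappa_u^{(a,-)}\}}\big]=1/\Psi^{(-)}_\lambda(b,a,u)$ as a by-product, Eq.~(\ref{eq:Fa=Fb}) and the remark following it). Where your route genuinely diverges is in how the competing-stop expectations, and in particular the common denominators, are computed: you invoke It\^o excursion theory directly (rate of a ``triggering'' excursion, Laplace exponent of accumulated good-excursion durations, entrance law for the meander factor), which is the approach only sketched in the paper's Remark~\ref{rem:rec}; the paper instead derives these via the $\varepsilon$-perturbation estimates of Lemma~\ref{lem:1} and the semimartingale local time, which is what Proposition~\ref{prop:1} and the proof of Theorem~\ref{theo:1} actually use. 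Your approach is conceptually cleaner in the recurrent case, while the paper's approach is uniformly valid and avoids having to track the escape excursion.

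Two points you would need to firm up. First, the excursion calculations are asserted rather than carried out: the claims
$n^{(a)}_+\!\left(e^{-\lambda T_b}1_{\{\text{reach }b\}}\right)=\omega_\lambda/W^{(b)}_\lambda(a)$ and the telescoping of the denominator into
$\big(\omega_\lambda/W^{(b)}_\lambda(a)\big)\Psi^{(-)}_\lambda(b,a,u)$
are correct (they follow by differentiating $\E_x[e^{-\lambda T_a}1_{\{T_a<T_b\}}]$ and $\E_x[e^{-\lambda T_b}1_{\{T_b<T_a\}}]$ at $a^\pm$ and using $W^{(a)\prime}_\lambda(a)=-s'(a)\omega_\lambda$), but a complete proof would have to display this bookkeeping, and likewise for $P_A$ at level $b$. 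Second, and more importantly, you do not address the transient case. As the paper emphasizes in Remark~\ref{rem:rec}, the positive and negative excursion point processes at $a$ are then only conditionally independent given $L^a_\infty$, so the ``first bad excursion at an independent exponential local time'' picture you use, in particular for $\E_a\big[e^{-\gamma\kappa_u^{(a,-)}}\big]$ where the escape excursion may be positive and hence non-triggering, needs justification. It does go through if one treats the atoms $\nu^{(a)}_\pm(\{+\infty\})$ as a killing rate and observes that $1/G_\gamma(a,a)=\int_{(0,+\infty]}(1-e^{-\gamma t})\big(\nu^{(a)}_+(dt)+\nu^{(a)}_-(dt)\big)$ already incorporates them; but without spelling this out, the argument as written is only a proof in the recurrent case.
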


\noindent
One may observe several independence properties in this formula.

\begin{enumerate}
\item Letting $\gamma\downarrow0$, we deduce that on the set $\{\kappa_v^{(b,+)}\leq \kappa_u^{(a,-)}<+\infty\}$, the random variables $\kappa_v^{(b, +)}$, $ X_{\kappa_u^{(a,-)}}$ and $X_{\kappa_v^{(b,+)}}$ are mutually independent. 
\item Taking $\alpha\equiv1$, we observe that one may remove the condition $\{ \kappa_u^{(a,-)}<+\infty\}$ in the definition of $F_{1,\beta, \gamma, \lambda}$. Letting $\gamma\downarrow0$ and then  $u\rightarrow +\infty$, we deduce that on the set $\{\kappa_v^{(b,+)}<+\infty\}$, the random variables $\kappa_v^{(b, +)}$ and $X_{\kappa_v^{(b,+)}}$  are independent. This independence property, when dealing with a single boundary, was already observed in the Brownian case by Chesney, Jeanblanc \& Yor \cite{CJY}.\\
\end{enumerate}

We mention below two situations where Theorem \ref{theo:1} greatly simplifies. First, when $\alpha\equiv\beta\equiv1$, one obtains the following corollary :
\begin{corollary}\label{coro:1}
For $a<b$ and $ \gamma,\lambda>0$, the Laplace transform of the pair $(\kappa_u^{(a,-)} , \kappa_v^{(b,+)})$ is given by :
$$
\E_b\left[e^{-\gamma \kappa_u^{(a,-)} - \lambda \kappa_v^{(b,+)} }1_{\{\kappa_v^{(b,+)}\leq \kappa_u^{(a,-)}\}}\right] \\
=e^{-\lambda v}\frac{W_{\gamma+\lambda}^{(b)}(a)}{\omega_{\gamma+\lambda}} \frac{
\int_v^{+\infty}e^{-\gamma t}\nu^{(b)}_+(dt) }{\Psi_{\gamma+\lambda}^{(+)}(a,b,v) -\frac{1}{\Psi_{\gamma+\lambda}^{(-)}(b, a, u)}} \E_b\left[e^{-\gamma\kappa_u^{(a,-)}}  \right]. 
$$
\end{corollary}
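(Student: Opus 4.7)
The plan is to read off Corollary~\ref{coro:1} as the specialization of Theorem~\ref{theo:1} at $x=b$ with $\alpha\equiv\beta\equiv 1$, and then show that the two factors involving the positive meander collapse into the simple Laplace integral $\int_v^{+\infty}e^{-\gamma t}\nu_+^{(b)}(dt)$ that appears in the statement.

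First I would specialize. At $x=b$ the prefactor on the right-hand side of \eqref{eq:Fx} reduces to $1$, since $W^{(b)}_{\gamma+\lambda}(b)=0$ (from the very definition of $W^{(c)}_\lambda$) kills the first term, while the second yields $W^{(a)}_{\gamma+\lambda}(b)/W^{(a)}_{\gamma+\lambda}(b)=1$. With $\alpha\equiv\beta\equiv 1$, the factor $\E_a^{\downarrow,u}[\alpha(X_u)]=1$ disappears. Thus Theorem~\ref{theo:1} gives
\[
F_{1,1,\gamma,\lambda}(b)=e^{-(\gamma+\lambda)v}\,\frac{W^{(b)}_{\gamma+\lambda}(a)}{\omega_{\gamma+\lambda}}\,\frac{\nu^{(b)}_+[v,+\infty]\,\E_b\!\left[e^{-\gamma\kappa_u^{(a,-)}}\right]}{\Psi^{(+)}_{\gamma+\lambda}(a,b,v)-\tfrac{1}{\Psi^{(-)}_{\gamma+\lambda}(b,a,u)}}\;\E_b^{\uparrow,v}\!\left[\frac{\Phi_{\gamma,+}(X_v)}{\Phi_{\gamma,+}(b)}\right],
\]
and it remains to identify $e^{-\gamma v}\nu^{(b)}_+[v,+\infty]\,\E_b^{\uparrow,v}[\Phi_{\gamma,+}(X_v)/\Phi_{\gamma,+}(b)]$ with $\int_v^{+\infty}e^{-\gamma t}\nu^{(b)}_+(dt)$.

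The key identity I would prove is the following. For $\varepsilon>0$, apply the strong Markov property at time $v$ on $\{T_b>v\}$: since $X_v>b$ and $\E_y[e^{-\gamma T_b}]=\Phi_{\gamma,+}(y)/\Phi_{\gamma,+}(b)$ for $y\geq b$, one gets
\[
\E_{b+\varepsilon}\!\left[e^{-\gamma T_b}\mathbf{1}_{\{T_b>v\}}\right]=e^{-\gamma v}\,\E_{b+\varepsilon}\!\left[\frac{\Phi_{\gamma,+}(X_v)}{\Phi_{\gamma,+}(b)}\,\mathbf{1}_{\{T_b>v\}}\right].
\]
Dividing by $s(b+\varepsilon)-s(b)$ and letting $\varepsilon\downarrow 0$, the left-hand side tends to $\int_v^{+\infty}e^{-\gamma t}\nu_+^{(b)}(dt)$ by the very definition of $\nu_+^{(b)}$ in the excerpt; while the right-hand side factors as $\Pb_{b+\varepsilon}(T_b>v)/(s(b+\varepsilon)-s(b))$ times the conditional expectation defining the positive meander, yielding $e^{-\gamma v}\,\nu_+^{(b)}[v,+\infty]\,\E_b^{\uparrow,v}[\Phi_{\gamma,+}(X_v)/\Phi_{\gamma,+}(b)]$. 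Substituting this back into the specialization and keeping only the factor $e^{-\lambda v}$ in front yields exactly Corollary~\ref{coro:1}.

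The main obstacle is mild: it is really just the meander identity above. One has to be careful with the sign of $\Phi'_{\gamma,+}(b)$ and with the possible infinite mass of $\nu^{(b)}_+$ near $0$ (both the meander expectation and $\nu^{(b)}_+[v,+\infty]$ are individually finite, but the product is what is needed), as well as with the dominated-convergence justification of the $\varepsilon\downarrow 0$ limit; the strong Markov reduction however is exact and bypasses any delicate martingale integrability question about $e^{-\gamma t}\Phi_{\gamma,+}(X_t)$.
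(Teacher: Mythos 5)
Your proposal is correct and is essentially the same argument as the paper's: the paper also specializes Theorem~\ref{theo:1} to $x=b$, $\alpha\equiv\beta\equiv1$, and then collapses the meander factor via exactly the Markov-property identity $\E_{X_v}\!\left[e^{-\gamma T_b}\right]=\Phi_{\gamma,+}(X_v)/\Phi_{\gamma,+}(b)$ on $\{T_b\geq v\}$, taking the $\varepsilon\downarrow 0$ limit to obtain $e^{-\gamma v}\nu^{(b)}_+[v,+\infty]\,\E_b^{\uparrow,v}[\Phi_{\gamma,+}(X_v)/\Phi_{\gamma,+}(b)]=\int_v^{+\infty}e^{-\gamma t}\nu^{(b)}_+(dt)$. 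The only cosmetic difference is that you divide by $s(b+\varepsilon)-s(b)$ on both sides while the paper divides by $\Pb_{b+\varepsilon}(T_b\geq v)$, which are interchangeable normalizations.
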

\noindent
This result follows from the observation that 
\begin{align*}
\E_{b}^{\uparrow,v}\left[   \frac{\Phi_{\gamma,+}(X_v)}{\Phi_{\gamma,+}(b)}\right] & =\lim_{\varepsilon\downarrow 0}\frac{\E_{b+\varepsilon}\left[ \E_{X_v}\left[e^{-\gamma T_b}\right]1_{\{T_b \geq v\}}   \right]}{\Pb_{b+\varepsilon}(T_b\geq v) }\\
&=\lim_{\varepsilon\downarrow 0}\frac{\E_{b+\varepsilon}\left[ e^{- \gamma (T_b-v)}1_{\{T_b \geq v\}}   \right]}{\Pb_{b+\varepsilon}(T_b\geq v) } = \frac{e^{\gamma v}\int_v^{+\infty}  e^{-\gamma t} \nu_+^{(b)}(dt)}{\nu_+^{(b)}[v, +\infty]}
\end{align*}
where we have applied the Markov property in the second equality. Another simplification is obtained by letting $a\uparrow b$.  Using  that $W_{\lambda}^{(a)}(b)\xrightarrow[a\uparrow b]{}0$ and computing the limit yields the general formula for the one barrier case :

\begin{corollary}\label{coro:2}
For  $\gamma, \lambda>0$, the Laplace transform of the pair $(\kappa_u^{(b,-)} , \kappa_v^{(b,+)})$ is given by :
\begin{multline*}
\E_b\left[e^{-\gamma \kappa_u^{(b,-)}-\lambda \kappa_v^{(b,+)} }1_{\{\kappa_v^{(b,+)}\leq \kappa_u^{(b,-)}\}}\right] \\
\\= \frac{e^{-\lambda v}\int_v^{+\infty}  e^{-\gamma t} \nu^{(b)}_+(dt)}{
  \int_v^{+\infty} e^{-(\gamma+\lambda)t} \nu^{(b)}_+(dt) + \int_u^{+\infty}e^{-(\gamma+\lambda)t}\nu^{(b)}_-(dt) +\frac{1}{G_{\gamma+\lambda}(b,b)} }\E_b\left[e^{-\gamma\kappa_u^{(b,-)}}\right]. \end{multline*}
\end{corollary}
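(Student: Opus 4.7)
\emph{Proof proposal.} The strategy I would follow is the one hinted at in the paragraph preceding the corollary: deduce Corollary~\ref{coro:2} from Corollary~\ref{coro:1} by letting $a\uparrow b$. Writing $\mu=\gamma+\lambda$ and $\epsilon=b-a$, the first observation is that the ratio appearing in Corollary~\ref{coro:1} is of indeterminate form $0/0$ in this limit: the numerator carries the factor $W_\mu^{(b)}(a)\to W_\mu^{(b)}(b)=0$, while in the denominator both $\Phi_{\mu,\pm}(a)/\Phi_{\mu,\pm}(b)\to 1$ and the $W$-factors vanish, forcing $\Psi_\mu^{(+)}(a,b,v)\to 1$ and $\Psi_\mu^{(-)}(b,a,u)\to 1$. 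Resolving this $0/0$ by a first-order expansion in $\epsilon$ is what produces the statement.

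Next I would compute the leading orders. Differentiating $W_\mu^{(c)}(x)=\Phi_{\mu,+}(x)\Phi_{\mu,-}(c)-\Phi_{\mu,-}(x)\Phi_{\mu,+}(c)$ at $x=c$ and invoking the Wronskian identity gives $\partial_x W_\mu^{(c)}(c)=-\omega_\mu s'(c)$, so
$$W_\mu^{(b)}(a)=\omega_\mu s'(b)\,\epsilon+o(\epsilon),\qquad W_\mu^{(a)}(b)=-\omega_\mu s'(b)\,\epsilon+o(\epsilon),$$
together with $\Phi_{\mu,\pm}(a)/\Phi_{\mu,\pm}(b)=1-(\Phi_{\mu,\pm}'(b)/\Phi_{\mu,\pm}(b))\epsilon+o(\epsilon)$. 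Using the continuity of $a\mapsto\nu_-^{(a)}$ near $b$ in the Laplace sense (which follows from its explicit representation $-s'(a)^{-1}\Phi_{\mu,-}'(a)/\Phi_{\mu,-}(a)$) and plugging into the definitions of $\Psi_\mu^{(\pm)}$, the expansion of $\Psi_\mu^{(+)}(a,b,v)-1/\Psi_\mu^{(-)}(b,a,u)$ to first order in $\epsilon$ produces a bracket containing the two quotients $\Phi_{\mu,\pm}'(b)/\Phi_{\mu,\pm}(b)$ together with the two truncated Laplace transforms of $\nu_\pm^{(b)}$.

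The algebraic crux---and the step I expect to require the most care---is the identity
$$\frac{\Phi_{\mu,-}'(b)}{\Phi_{\mu,-}(b)}-\frac{\Phi_{\mu,+}'(b)}{\Phi_{\mu,+}(b)}=\frac{s'(b)\omega_\mu}{\Phi_{\mu,-}(b)\Phi_{\mu,+}(b)}=\frac{s'(b)}{G_\mu(b,b)},$$
obtained from the definition of $\omega_\mu$ and the Green-kernel representation $G_\mu(b,b)=\omega_\mu^{-1}\Phi_{\mu,-}(b)\Phi_{\mu,+}(b)$. This is precisely what furnishes the $1/G_\mu(b,b)$ term in the symmetric denominator of Corollary~\ref{coro:2}; without this identification the limit cannot be put into the desired form. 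After dividing numerator and denominator by the common factor $s'(b)\epsilon$ and sending $\epsilon\to 0$, the main fraction reduces to the stated expression.

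To finish, I would handle the remaining factor $\E_b[e^{-\gamma\kappa_u^{(a,-)}}]$ appearing in Corollary~\ref{coro:1}: its closed form is provided by the last formula of Theorem~\ref{theo:1}, and the continuity in $a$ of $\Phi_{\gamma,+}$, $G_\gamma$ and $\int_u^{+\infty}e^{-\gamma t}\nu_-^{(a)}(dt)$ (this last point requiring essentially the same Laplace-continuity argument as above) forces it to converge to $\E_b[e^{-\gamma\kappa_u^{(b,-)}}]$. Multiplying the two limits yields the claimed formula. The main obstacle is, as said, the bookkeeping of the first-order expansion and the recognition of the Wronskian combination as $s'(b)/G_\mu(b,b)$; once those are in place the rest is an essentially mechanical passage to the limit.
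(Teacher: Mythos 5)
Your proposal is correct and follows exactly the route the paper suggests, namely letting $a\uparrow b$ in Corollary~\ref{coro:1}; you merely supply the first-order expansion, the identity $\Phi_{\mu,-}'(b)/\Phi_{\mu,-}(b)-\Phi_{\mu,+}'(b)/\Phi_{\mu,+}(b)=s'(b)/G_\mu(b,b)$, and the Laplace-continuity of $\nu_-^{(a)}$ that the paper leaves implicit behind the phrase ``computing the limit.''
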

In particular, when $X$ is a Brownian motion with drift, by letting $\gamma \downarrow 0$, we recover the formula obtained by Dassios \& Wu \cite{DW1}.

\subsection{Parisian ruin probabilities}

Letting $\gamma\downarrow0$ in the Laplace transform of $\kappa^{(a,-)}_u$ given in Theorem \ref{theo:1}, we obtain the following ruin probabilities :
\begin{enumerate}
\item if $X$ is recurrent, then $\lim\limits_{\gamma\downarrow 0} G_{\gamma}(a,a)=+\infty$ and $\nu_-^{(a)}(\{+\infty\})=0$, hence
$$\Pb_a\left(\kappa_u^{(a,-)}<+\infty\right) = \frac{\nu^{(a)}_-[u,+\infty]}{\nu^{(a)}_-[u,+\infty)}=1.$$
\item if $X$ is transient, then $G_0(a,a)=\lim\limits_{\gamma\downarrow 0} G_{\gamma}(a,a)<+\infty$ and 
$$\Pb_a\left(\kappa_u^{(a,-)}<+\infty\right) = \frac{\nu^{(a)}_-[u,+\infty]}{\frac{1}{G_0(a,a)}+\nu^{(a)}_-[u,+\infty)}.$$
In particular, if $X$ goes to $\inf(I)$ a.s. as $t\rightarrow +\infty$, then $\frac{1}{G_0(a,a)} = \nu_-^{(a)}(\{+\infty\})$ and this probability also equals 1.
\end{enumerate}
Besides Brownian motion with drift, such Parisian ruin probabilities have also been computed for spectrally negative L\'evy processes, see for instance Loeffen, Czarna \& Palmowski \cite{LCZ}. \\

Another probability of interest is the probability that $\kappa_v^{(b,+)}$ takes place before $\kappa_u^{(a,-)}$, which may be obtained by letting $\lambda$ and $\gamma$ go to 0 in Corollary \ref{coro:1} and Formula (\ref{eq:Fx}).

\begin{corollary}\label{cor:kb<ka}
Assume that either $X$ is recurrent, or $X$ is transient and goes a.s to $\sup(I)$ or to $\inf(I)$ as $t\rightarrow +\infty$. Then, for $a\leq x\leq b$ :
\begin{multline*}
\Pb_x\left(\kappa_v^{(b,+)}\leq \kappa_u^{(a,-)} <+\infty \right) \\
= \left( \frac{s(b)-s(x)}{\frac{1}{\Pb_a(T_b<+\infty)} + (s(b)-s(a)) \nu^{(a)}_-[u,+\infty)  } +( s(x)-s(a))\right)\frac{\Pb_b\left(\kappa_v^{(b,+)}\leq \kappa_u^{(a,-)} <+\infty \right)}{s(b)-s(a)}
\end{multline*}
with
\begin{multline*}
\Pb_b\left(\kappa_v^{(b,+)}\leq \kappa_u^{(a,-)} <+\infty \right)\\
= \frac{(s(b)-s(a))  \nu^{(b)}_+[v,+\infty)\Pb_b(T_a<+\infty) \Pb_a\left(\kappa_u^{(a,-)}<+\infty\right)}{\frac{1}{\Pb_b(T_a<+\infty)} +(s(b)-s(a))  \nu^{(b)}_+[v,+\infty)  -\frac{1}{\frac{1}{\Pb_a(T_b<+\infty)} +(s(b)-s(a)) \nu^{(a)}_-[u,+\infty)    }}  .
\end{multline*}
\end{corollary}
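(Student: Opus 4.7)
The plan is to let $\gamma,\lambda\downarrow 0$ in Corollary \ref{coro:1} to obtain the value at $x=b$, and then in Formula (\ref{eq:Fx}) with $\alpha\equiv\beta\equiv 1$ to extend it to arbitrary $x\in[a,b]$. Dominated convergence (the integrand is bounded by the indicator of $\{\kappa_v^{(b,+)}\leq \kappa_u^{(a,-)}<+\infty\}$) turns the left-hand sides into the target probabilities. The factor $\E_b[e^{-\gamma\kappa_u^{(a,-)}}]$ appearing on the right of Corollary \ref{coro:1} splits, via the strong Markov property at $T_a$, as $\E_b[e^{-\gamma T_a}]\E_a[e^{-\gamma\kappa_u^{(a,-)}}]$ and therefore converges to $\Pb_b(T_a<+\infty)\Pb_a(\kappa_u^{(a,-)}<+\infty)$, producing the numerator shape of the statement.

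Next I would identify the limits of the remaining building blocks. The two prefactors in (\ref{eq:Fx}) are the Laplace transforms of $T_a$ and $T_b$ on $\{T_a<T_b\}$ and $\{T_b<T_a\}$ respectively, so at $\lambda=0$ they become $(s(b)-s(x))/(s(b)-s(a))$ and $(s(x)-s(a))/(s(b)-s(a))$; a direct expansion of $\omega_\lambda$ and $W_\lambda^{(b)}(a)$ at $\lambda\downarrow 0$ yields $W_\lambda^{(b)}(a)/\omega_\lambda\to s(b)-s(a)$. The algebraic heart of the argument is then the identity
$$\frac{\Phi_{\lambda,-}(b)}{\Phi_{\lambda,-}(a)}=\E_b\!\left[e^{-\lambda T_a}\right]+\frac{W_\lambda^{(b)}(a)}{\omega_\lambda\,G_\lambda(a,a)},$$
which I would get by dividing $W_\lambda^{(a)}(b)=\Phi_{\lambda,+}(b)\Phi_{\lambda,-}(a)-\Phi_{\lambda,-}(b)\Phi_{\lambda,+}(a)$ by $\Phi_{\lambda,+}(a)\Phi_{\lambda,-}(a)=\omega_\lambda G_\lambda(a,a)$ and using $W_\lambda^{(a)}(b)=-W_\lambda^{(b)}(a)$. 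Since $\Phi_{0,-}(a)/\Phi_{0,-}(b)=\Pb_a(T_b<+\infty)$, passing $\lambda\downarrow 0$ recovers the classical identity $1/\Pb_a(T_b<+\infty)=\Pb_b(T_a<+\infty)+(s(b)-s(a))/G_0(a,a)$, and monotone convergence on the Lévy integral then gives
$$\Psi_0^{(-)}(b,a,u)=\frac{1}{\Pb_a(T_b<+\infty)}+(s(b)-s(a))\,\nu_-^{(a)}[u,+\infty).$$
The symmetric computation provides $\Psi_0^{(+)}(a,b,v)=1/\Pb_b(T_a<+\infty)+(s(b)-s(a))\nu_+^{(b)}[v,+\infty)$; plugging everything into Corollary \ref{coro:1} produces the closed form at $x=b$, and injecting this back into (\ref{eq:Fx}) with the prefactor limits yields the general statement.

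The main delicate point is the treatment of the atom at $+\infty$ of the Lévy measures in the transient regime: the standing hypothesis (recurrent, or $X$ converging a.s.\ to one of the endpoints) is exactly what makes $1/G_0(a,a)$ either vanish (recurrent case) or coincide with exactly one of the escape atoms $\nu_-^{(a)}(\{+\infty\})$ or $\nu_+^{(a)}(\{+\infty\})$. This is what allows the limit $\int_u^{+\infty}e^{-\lambda t}\nu_-^{(a)}(dt)\to\nu_-^{(a)}[u,+\infty)$ to separate cleanly from the Green-kernel term inside $\Psi_0^{(-)}$, and simultaneously forces $\Pb_a(\kappa_u^{(a,-)}<+\infty)$ to take the closed form already extracted from Theorem \ref{theo:1}. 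Positivity of the denominator $\Psi_0^{(+)}(a,b,v)-1/\Psi_0^{(-)}(b,a,u)$, which follows from $1/\Pb_b(T_a<+\infty)\geq 1\geq \Pb_a(T_b<+\infty)$, guarantees that the limiting ratio remains well defined.
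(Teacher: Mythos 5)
Your overall strategy — take $\gamma,\lambda\downarrow 0$ in Corollary~\ref{coro:1} and then in Formula~(\ref{eq:Fx}), using the strong Markov property at $T_a$ to split $\E_b[e^{-\gamma\kappa_u^{(a,-)}}]$ and the identity $W_\lambda^{(a)}(b)=-W_\lambda^{(b)}(a)$ to express the $\Psi$-functions through hitting probabilities — is exactly the route the paper takes, and your algebraic reductions of $\Psi_0^{(\pm)}$ are correct. You also correctly obtain the scale-function form of the prefactors from $\Pb_x(T_a<T_b)$ and $\Pb_x(T_b<T_a)$.

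However, the one step you dismiss as ``a direct expansion'' is precisely the technical heart of the paper's proof, and it is also exactly where the standing hypothesis is consumed. The limit
$W_\lambda^{(b)}(a)/\omega_\lambda\to s(b)-s(a)$
is \emph{not} automatic: in the recurrent case both $W_\lambda^{(b)}(a)$ and $\omega_\lambda$ vanish as $\lambda\downarrow 0$, and the ratio is an indeterminate $0/0$; in the transient case, using the paper's integral representation
$W_\lambda^{(b)}(a)/\omega_\lambda=\int_a^b\frac{\E_b[e^{-\lambda T_z}]}{\E_z[e^{-\lambda T_a}]}s'(z)\,dz$,
the limiting integrand is $\Pb_b(T_z<\infty)/\Pb_z(T_a<\infty)$, which equals $1$ only if $X$ is recurrent or escapes a.s.\ to $\inf(I)$; when $X$ escapes a.s.\ to $\sup(I)$ one needs the normalization $s(\sup I)=0$ and a separate calculation to still get $s(b)-s(a)$. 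If $X$ could escape to either end with positive probability, this limit would be a genuinely different quantity and the corollary would fail. Your ``delicate point'' paragraph instead locates the role of the hypothesis in the interplay between $1/G_0(a,a)$ and the atom $\nu_\pm^{(a)}(\{+\infty\})$ and in the monotone-convergence limit of the L\'evy integral; but that limit $\int_u^\infty e^{-\lambda t}\nu_-^{(a)}(dt)\to\nu_-^{(a)}[u,+\infty)$ holds without any assumption (the factor $e^{-\lambda t}$ always kills the atom at $+\infty$), and the $1/G_0=\nu(\{+\infty\})$ identity, while true, is not needed here because $\Pb_a(\kappa_u^{(a,-)}<+\infty)$ is left unevaluated in the statement. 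So your proof would work once the $W/\omega$ limit is justified case by case (as the paper does), but as written you assert the crucial limit and misidentify where the hypothesis is actually used.
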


\begin{remark}\label{rem:rec}
When $X$ is recurrent, since $\Pb_b(T_a<+\infty)=\Pb_a(T_b<+\infty)=1$, Corollary 4 greatly simplifies and one obtains :
$$\Pb_x\left(\kappa_v^{(b,+)}\leq \kappa_u^{(a,-)} \right) = \frac{\bigg(1+(s(x)-s(a))\nu^{(a)}_-[u,+\infty)\bigg) \nu^{(b)}_+[v,+\infty)}{\nu^{(a)}_-[u,+\infty)+\nu^{(b)}_+[v,+\infty) + (s(b)-s(a))\nu^{(a)}_-[u,+\infty)\nu^{(b)}_+[v,+\infty)}.$$
\end{remark}

\medskip
\noindent
To prove Corollary \ref{cor:kb<ka},  observe that by definition of the Wronskian $\omega_\lambda$, we have
\begin{align*}
\frac{W^{(b)}_\lambda(a)}{\omega_\lambda}  &= \Phi_{\lambda,+}(a)\Phi_{\lambda,+}(b)\int_a^b \frac{s^\prime(z)}{\Phi^2_{\lambda,+}(z)}dz = \int_a^b \frac{\E_b\left[e^{-\lambda T_z}\right]}{\E_z\left[e^{-\lambda T_a}\right]}s^\prime(z)dz. 
\end{align*}
Then, on the one hand, when $X$ is recurrent or $X$ is transient and goes a.s. to $\inf(I)$,  we obtain
$$\frac{W^{(b)}_\lambda(a)}{\omega_\lambda}  \xrightarrow[\lambda\downarrow0]{} \int_a^b s^\prime(z) dz = s(b)-s(a).$$
On the other hand, when $X$ is transient and goes a.s. to $\sup(I)$, we may assume without loss of generality that $s(\sup(I))=0$ and again
$$\frac{W^{(b)}_\lambda(a)}{\omega_\lambda}  \xrightarrow[\lambda\downarrow0]{} \int_a^b \frac{s(b)/s(z)}{s(z)/s(a)}s^\prime(z) dz = s(b)-s(a).$$
Corollary  \ref{cor:kb<ka} now follows by letting $\gamma$ and $\lambda$ go to 0 in Corollary \ref{coro:1} and Formula (\ref{eq:Fx}) and using the above limit.\\

\subsection{Remarks on the meanders}
We shall give in Section \ref{sec:4} a few examples where all the terms appearing in Theorem \ref{theo:1} may be computed explicitly.
To this end, it is worthwhile to note that the terms involving the meanders in Theorem \ref{theo:1} only require the knowledge of the transition density $p$ and of the L\'evy measures $\nu^{(a)}_{\pm}$. Indeed, by applying the Markov property
$$
\E_x\left[\alpha(X_u) 1_{\{T_a\geq u\}}\right]  =\E_x\left[\alpha(X_u)\right] - \int_0^u \E_a\left[\alpha(X_{u-t})\right]  \Pb_x(T_a\in dt).
$$
Inverting this transform, we obtain after some simple manipulations :
\begin{multline*}
\Pb_x\left(X_u\in dz |T_a\geq u\right)  / m(dz)\\
= \frac{p(u; x,z) - p(u; a,z)}{\Pb_x(T_a\geq u)} + p(u; a,z) + \int_0^u \frac{p(u; a,z) - p(u-t; a,z)}{\Pb_x(T_a\geq u)} \Pb_x(T_a\in dt).
\end{multline*}
Letting then $x\uparrow a$, we deduce that for $z<a$ :
\begin{multline*}
\Pb_a^{(\downarrow, u)}\left(X_u\in dz\right)/m(dz) \\
= - \frac{1}{s^\prime(a)\nu^{(a)}_-[u,+\infty]} \frac{\partial p}{\partial x}(u;x,z)_{|x=a}  + p(u; a,z) + \int_0^u \frac{p(u; a,z) - p(u-t; a,z)}{\nu^{(a)}_-[u,+\infty]} \nu_-^{(a)}(dt).
\end{multline*}
One may also give an alternative definition of the meanders as derivatives. Indeed, for example, by definition of the left derivative :
\begin{align}
\notag\Pb_a^{(\downarrow, u)}\left(\Lambda_u\right) &= \lim_{\varepsilon\downarrow0}  \frac{-\varepsilon}{\Pb_{a-\varepsilon}\left(T_a \geq  u\right)}\frac{ \Pb_{a-\varepsilon}\left(  \Lambda_u 1_{\{T_a\geq u \}} \right)}{-\varepsilon}\\
\label{eq:meander}&= -\frac{1}{s^\prime(a)\nu^{(a)}_-[u,+\infty] }\frac{\text{d} }{\text{d} x} \Pb_x\left(\Lambda_u1_{\{T_a\geq u \}} \right)_{\big|x=a^-}.
\end{align}
Formula (\ref{eq:meander}) will be the one appearing during the proofs.

\section{Preliminary computations}\label{sec:2}

Our proof will rely on some local estimates for $t\rightarrow p(t; a,a)$. Let us introduce the semimartingale local time of $X$ :
$$L_t^a = \sigma^2(a) \lim_{\varepsilon\downarrow 0} \frac{1}{2\varepsilon} \int_0^{t}1_{\{|X_s-a|\leq \varepsilon\}}ds,\qquad t\geq0.$$
From the occupation time formula (see \cite[Chapter VI]{RY}) and the expression (\ref{eq:sm}) of the speed measure $m$, we have
$$\E_a\left[L_t^a\right] = \frac{2}{s^\prime(a)} \int_0^{t} p(t; a,a) dt .$$
In particular, using Fatou's lemma, and the fact that $\sigma>0$, we deduce that
$$\frac{ 2}{s^\prime(a)}\liminf_{\varepsilon\downarrow{0}} \frac{1}{\varepsilon}\int_0^{\varepsilon} p(t; a,a) dt = \liminf_{\varepsilon\downarrow{0}} \E_a\left[\frac{L_\varepsilon^a-L_0^a}{\varepsilon}\right] \geq\E_a\left[ \liminf_{\varepsilon\downarrow{0}} \frac{L_\varepsilon^a-L_0^a}{\varepsilon}\right] =+\infty $$
which yields a first estimate :
\begin{equation}
\label{eq:e/p}\lim_{\varepsilon\downarrow{0}} \frac{\varepsilon}{\int_0^{\varepsilon} p(t; a,a) dt}=0.
\end{equation}

\noindent
We now prove a lemma which will be repeatedly used in the sequel.

\subsection{Limiting lemma}

\begin{lemma}\label{lem:1}
Let $a\in \text{Int}(I)$ and let $\varphi$ be a positive and continuous function such that $\E_x\left[\varphi(X_t)\right]<+\infty$ is finite for $t$ small enough and $x$ in the vicinity of $a$. We assume that $\varphi(a)=0$ and that $\varphi$ admits a left and a right derivative at the point $a$. Then the following asymptotics hold:
$$\E_a\left[\varphi(X_{\varepsilon}) 1_{\{X_{\varepsilon}\geq a\}}\right]\; \equi_{\varepsilon \downarrow 0}\; \frac{1}{s^\prime(a)}\varphi^\prime(a^+)\int_0^{\varepsilon} p(t; a,a) dt $$
and
$$\E_a\left[\varphi(X_{\varepsilon}) 1_{\{X_{\varepsilon}\leq a\}}\right]\; \equi_{\varepsilon \downarrow 0}\;-\frac{1}{s^\prime(a)}\varphi^\prime(a^-) \int_0^{\varepsilon} p(t; a,a) dt.$$
\end{lemma}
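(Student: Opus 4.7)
My plan is to reduce the statement to the linear case $\varphi(x) = (x-a)_+$, handled via Tanaka's formula; the second asymptotic will follow by the symmetric argument (applied to $\varphi(2a-\cdot)$ and the process reflected at $a$).

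For the linear case I would apply Tanaka's formula to $(X-a)_+$ starting from $X_0 = a$:
\[
(X_\varepsilon - a)_+ = \int_0^\varepsilon 1_{\{X_s>a\}}\sigma(X_s)\,dB_s + \int_0^\varepsilon 1_{\{X_s>a\}}\mu(X_s)\,ds + \tfrac{1}{2} L^a_\varepsilon.
\]
After a routine localization at the exit from a compact neighbourhood of $a$, the stochastic integral is a true martingale of zero $\Pb_a$-expectation, and combining this with the identity $\E_a[L^a_\varepsilon] = \frac{2}{s'(a)}\int_0^\varepsilon p(t;a,a)\,dt$ recalled in the preamble yields $\E_a[(X_\varepsilon - a)_+] = O(\varepsilon) + \frac{1}{s'(a)}\int_0^\varepsilon p(t;a,a)\,dt$. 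By \eqref{eq:e/p} the drift term $O(\varepsilon)$ is negligible against the local-time term, which proves the lemma in this case.

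For general $\varphi$, set $c = \varphi'(a^+)$. Given $\eta>0$, pick $\delta>0$ such that $|\varphi(x) - c(x-a)| \leq \eta(x-a)$ on $[a,a+\delta]$, and split $\E_a[\varphi(X_\varepsilon)\,1_{\{X_\varepsilon \geq a\}}]$ at the level $a+\delta$. The interior piece is sandwiched between $(c \pm \eta)\,\E_a[(X_\varepsilon - a)\,1_{\{a \leq X_\varepsilon \leq a+\delta\}}]$, which, by the linear case, equals $\frac{c}{s'(a)}\int_0^\varepsilon p(t;a,a)\,dt$ up to an error of order $\eta \int_0^\varepsilon p(t;a,a)\,dt$ plus the tail $\E_a[(X_\varepsilon - a) 1_{\{X_\varepsilon > a+\delta\}}]$. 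Both exterior contributions are controlled via the strong Markov property at $T_{a+\delta}$ by
\[
\Big(\sup_{t \leq \varepsilon_0}\E_{a+\delta}[\varphi(X_t)] + \sup_{t \leq \varepsilon_0}\E_{a+\delta}[X_t - a]\Big)\,\Pb_a(T_{a+\delta} \leq \varepsilon),
\]
with the suprema finite by the integrability hypothesis on $\varphi$ and the local boundedness of the coefficients. The main obstacle is to argue cleanly that $\Pb_a(T_{a+\delta} \leq \varepsilon) = o(\int_0^\varepsilon p(t;a,a)\,dt)$: since $T_{a+\delta}$ admits a continuous density (It\^o--McKean) and $\E_a[e^{-\lambda T_{a+\delta}}] = \Phi_{\lambda,+}(a)/\Phi_{\lambda,+}(a+\delta) \to 0$ as $\lambda \to \infty$, a Tauberian argument shows that this density vanishes at the origin, so $\Pb_a(T_{a+\delta} \leq \varepsilon) = o(\varepsilon)$, and \eqref{eq:e/p} then provides the required smallness. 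Letting $\eta \downarrow 0$ concludes the proof.
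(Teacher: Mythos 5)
Your proposal is correct and takes a genuinely different route from the paper for the key computation. The paper proves the asymptotics for $\E_a[(X_{\varepsilon}-a)1_{\{a\leq X_{\varepsilon}< a+\delta\}}]$ by integrating the forward equation $\partial_t p = \mathcal{G}p$ over $[0,\varepsilon]$, performing two integrations by parts against the weight $(z-a)/s'(z)$, and invoking \eqref{eq:p0}--\eqref{eq:p'0}; you instead get $\E_a[(X_{\varepsilon}-a)^+] = O(\varepsilon) + \tfrac{1}{s'(a)}\int_0^{\varepsilon}p(t;a,a)\,dt$ directly from Tanaka's formula and the local-time identity already recorded in the preamble. Your route is cleaner and shorter: it isolates the local-time contribution in one stroke and dispenses with the PDE manipulations, at the price of invoking It\^o/Tanaka calculus rather than staying within the purely analytic scale/speed framework the paper uses. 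The sandwiching of general $\varphi$ by linear comparison functions on $[a,a+\delta]$ is essentially the same as the paper's \eqref{eq:infsup}.

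Two small points in the tail estimate are worth tightening. First, you wrote $\E_a[e^{-\lambda T_{a+\delta}}] = \Phi_{\lambda,+}(a)/\Phi_{\lambda,+}(a+\delta)$, but since $a<a+\delta$ this should be the increasing solution, $\Phi_{\lambda,-}(a)/\Phi_{\lambda,-}(a+\delta)$. Second, the Tauberian step \emph{as stated} (``the Laplace transform tends to $0$, hence the density vanishes at the origin'') is not quite enough: you need either the final-value theorem $\lim_{\lambda\to\infty}\lambda\widehat{f}(\lambda)=f(0^+)$ together with continuity of the hitting density at $0$, or more simply Markov's inequality $\Pb_a(T_{a+\delta}\leq\varepsilon)\leq e\,\E_a[e^{-T_{a+\delta}/\varepsilon}]$, which combined with the super-polynomial decay of $\Phi_{\lambda,-}(a)/\Phi_{\lambda,-}(a+\delta)$ in $\lambda$ gives the required $o(\varepsilon)$ (and in fact much more). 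The paper short-circuits this by quoting the Kotani--Watanabe small-time estimate \eqref{eq:KW}, which gives exponential decay directly; your argument is weaker but, once made precise, still sufficient after \eqref{eq:e/p}. Finally, the reduction of the second asymptotics to ``the process reflected at $a$'' is informal---the reflected process is not a diffusion of the same type unless the coefficients are symmetric---but the intended meaning is clear: repeat the Tanaka argument with $(a-X_{\varepsilon})^+$ and $1_{\{X_s<a\}}$, which works verbatim.
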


\begin{proof}
We only prove the first asymptotics, as the second one will follow from a similar argument. Take $\delta$ small enough and let us decompose the first expectation in Lemma \ref{lem:1} according as whether $X_\varepsilon$ is greater or smaller than $a+\delta$ : 
$$\E_a\left[\varphi(X_{\varepsilon}) 1_{\{X_{\varepsilon}\geq a\}}\right] = \E_a\left[\varphi(X_{\varepsilon}) 1_{\{X_{\varepsilon}\geq a+\delta\}}\right]+\E_a\left[\varphi(X_{\varepsilon}) 1_{\{a+\delta> X_{\varepsilon}\geq a\}}\right].$$
Applying the Markov property, the first term on the right-hand side is smaller than 
\begin{align*}
\E_a\left[\varphi(X_\varepsilon)1_{\{X_\varepsilon\geq a+\delta\}}\right] &=   \E_a\left[ \E_{a+\delta}\left[\varphi(X_{\varepsilon-s})1_{\{X_{\varepsilon-s}\geq a+\delta\}}\right]_{\big| s = T_{a+\delta}}1_{\{T_{a+\delta}\leq \varepsilon\}} \right]\\
&\leq   \sup_{s\in[0,\varepsilon]}\E_{a+\delta}\left[ \varphi(X_s)   \right] \Pb_a\left(T_{a+\delta}\leq \varepsilon\right)
\end{align*}
From Kotani \& Watanabe \cite{KW}, there is the asymptotics for $z\in \text{Int}(I)$ :
\begin{equation}\label{eq:KW}
\lim_{\varepsilon\downarrow 0} -2 \varepsilon \ln\left( \Pb_a\left(T_{z}\leq \varepsilon\right) \right) = \left(\int_a^{z}\frac{1}{\sigma(u)\sqrt{s^\prime(u)}}du \right)^2.
\end{equation}
 As a consequence, we deduce from (\ref{eq:e/p}) that 
$$
\lim_{\varepsilon\rightarrow 0} \frac{\E_a\left[\varphi(X_\varepsilon)1_{\{X_\varepsilon\geq a+\delta\}}\right] }{\int_0^\varepsilon p(t; a,a) dt} =0
$$
which proves that this term is negligible. Then, since $\varphi(a)=0$, the second term may be bounded by :
\begin{equation}\label{eq:infsup}
 \inf_{a \leq z\leq a+\delta}\frac{\varphi(z)}{z-a} \leq \frac{\E_a\left[\varphi(X_{\varepsilon}) 1_{\{a+\delta> X_{\varepsilon}\geq  a\}}
\right]}{ \E_a\left[(X_{\varepsilon}-a) 1_{\{a+\delta>X_{\varepsilon}\geq a\}} \right]} \leq  \sup_{a \leq z\leq a+\delta}\frac{\varphi(z)}{z-a}.
\end{equation}
Recall now that for any $x\in \text{Int}(I)$, the function $(t,y)\longrightarrow p(t; x,y)$ is a solution of the partial differential equation 
$\displaystyle \frac{\partial }{\partial t}p = \mathcal{G}p$ on $(0,+\infty)\times\text{Int}(I)$, 
and that for $z\neq a$, from Kotani \& Watanabe \cite{KW} :
\begin{equation}\label{eq:p0}
\lim_{t\downarrow0}p(t; a, z) = 0.
\end{equation}
Since the scale function $s$ and the speed measure $m$ are assumed to be absolutely continuous, an adaptation of their proof shows also that
\begin{equation}\label{eq:p'0}
\lim_{t\downarrow0}\frac{\partial p}{\partial z}(t; a, z) = 0.
\end{equation}
Now, integrating the partial differential equation for $p$ on $[0,\varepsilon]$ and using (\ref{eq:p0}), we deduce that for $z\neq a$, 
$$p(\varepsilon; a,z) =  \int_0^{\varepsilon}  \left( \frac{1}{2}\sigma^2(z) \frac{\partial^2 p}{\partial z^2} (t; a,z) + \mu(z)  \frac{\partial p}{\partial z} (t; a,z)\right) dt. $$
Then applying Fubini theorem, and recalling that $m(dx) = 2/(s^\prime(x)\sigma^2(x))dx$, we may write 
\begin{multline*}
\E_a\left[(X_{\varepsilon}-a) 1_{\{a+\delta>X_{\varepsilon}\geq a\}} \right]\\
= \int_0^{\varepsilon} \left(\int_a^{a+\delta} \frac{z-a}{s^\prime(z)}  \frac{\partial^2 p}{\partial z^2} (t; a,z) dz +  \int_a^{a+\delta}(z-a) \mu(z)  \frac{\partial p}{\partial z} (t; a,z) m(dz)\right) dt.
\end{multline*}
Integrating by parts the first term, and using the expression (\ref{eq:sm}) of  $s$, we observe that the terms in $\mu$ cancel and we obtain :
$$
\E_a\left[(X_{\varepsilon}-a) 1_{\{a+\delta>X_{\varepsilon}\geq a\}} \right]\\
=
\int_0^{\varepsilon}\left(\frac{\delta}{s^\prime(a+\delta)}\frac{\partial p}{\partial z} (t; a,a+\delta)   -  \int_a^{a+\delta} \frac{1}{s^\prime(z)}  \frac{\partial p}{\partial z} (t; a,z)  dz\right) dt.
$$
Using yet another integration by parts, we finally obtain :
\begin{multline*}
\E_a\left[(X_{\varepsilon}-a) 1_{\{a+\delta>X_{\varepsilon}\geq a\}} \right] =\frac{1}{s^\prime(a)}\int_0^{\varepsilon} p(t; a, a) dt\\
+\int_0^{\varepsilon} \left( \frac{1}{s^\prime(a+\delta)}\left(\delta\frac{\partial p}{\partial z} (t; a,a+\delta)  -p(t; a, a+\delta)\right) + \E_a\left[\mu(X_t)1_{\{a+\delta \geq X_t\geq a\}}\right]\right) dt.
\end{multline*}
As a consequence, going back to (\ref{eq:infsup}), we deduce from (\ref{eq:p0}), (\ref{eq:p'0}), and (\ref{eq:e/p}) that
\begin{multline*}
 \inf_{a \leq z\leq a+\delta}\frac{\varphi(z)}{z-a} \leq  \liminf_{\varepsilon\downarrow0} \frac{s^\prime(a)\E_a\left[\varphi(X_{\varepsilon}) 1_{\{ a+\delta>X_{\varepsilon}\geq a\}}\right]}{\int_0^{\varepsilon} p(t; a,a) dt}\\\leq 
 \limsup_{\varepsilon\downarrow0} \frac{s^\prime(a)\E_a\left[\varphi(X_{\varepsilon}) 1_{\{a+\delta> X_{\varepsilon}\geq a\}}\right]}{\int_0^{\varepsilon} p(t; a,a) dt}\leq  \sup_{a \leq z\leq a+\delta} \frac{\varphi(z)}{z-a} 
 \end{multline*}
 and the result follows by letting $\delta\downarrow0$.

\end{proof}

\subsection{The distribution of $(\kappa^{(a,-)}_u, X_{\kappa^{(a,-)}_u})$}

We now prove a simpler version of Theorem \ref{theo:1}, involving only $\kappa^{(a,-)}_u$ and  $X_{\kappa^{(a,-)}_u}$.

\begin{proposition}\label{prop:1}
On the set $\{\kappa^{(a,-)}_u<+\infty\}$ the random variables $\kappa^{(a,-)}_u$ and  $X_{\kappa^{(a,-)}_u}$ are independent, and their distributions are respectively given by
$$ \E_a\left[e^{-\gamma \kappa_u^{(a,-)}}\right] =e^{-\gamma u}\frac{\nu^{(a)}_-[u,+\infty]}{\frac{1}{G_\gamma(a,a)}+\int_u^{+\infty}e^{-\gamma t}\nu^{(a)}_-(dt)} $$
and
$$ \E_a\left[\alpha\big(X_{\kappa_u^{(a,-)}}\big)1_{\{\kappa_u^{(a,-)}<+\infty\}}\right] = \E_a^{\downarrow,u}\left[\alpha \left(X_{u} \right)\right].
 $$
\end{proposition}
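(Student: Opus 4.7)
The plan is to work in the Pitman--Yor framework via It\^o's excursion theory. First I would decompose the path of $X$ starting from $a$ into its excursions from $a$, indexed by the semimartingale local time $L^a$; these form a Poisson point process whose characteristic measure $n=n_++n_-$ splits according to excursion sign, with lifetime marginals $\nu_\pm^{(a)}$ (possibly with an atom at $+\infty$ in the transient case). I would then classify the marks into three types: \emph{successful} (negative, $\zeta\geq u$; rate $c:=\nu_-^{(a)}[u,+\infty]$), \emph{terminating} (positive, $\zeta=+\infty$; rate $f:=\nu_+^{(a)}(\{+\infty\})$), and \emph{regular} (everything else; L\'evy measure $\mu(dt):=\nu_+^{(a)}(dt)\,1_{(0,\infty)}+\nu_-^{(a)}(dt)\,1_{(0,u)}$). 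Writing $\ell^{\ast}$ for the local time of the first successful excursion and $\tau_\ell=\inf\{t:L^a_t>\ell\}$, the event $\{\kappa_u^{(a,-)}<+\infty\}$ is exactly the event that the first non-regular excursion is successful, and on it
\[\kappa_u^{(a,-)}=\tau_{\ell^{\ast}-}+u,\qquad X_{\kappa_u^{(a,-)}}=e^{\ell^{\ast}}(u).\]

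The independence of $\kappa_u^{(a,-)}$ and $X_{\kappa_u^{(a,-)}}$ would then be immediate from standard Poisson independence: conditionally on $\{\kappa_u^{(a,-)}<+\infty\}$, $\ell^{\ast}$ is $\mathrm{Exp}(c+f)$, the PPP of regular excursions at levels $<\ell^{\ast}$ is independent of $\ell^{\ast}$ (and gives $\tau_{\ell^{\ast}-}$ as a compound Poisson sum), and $e^{\ell^{\ast}}$ itself has law $n_-(\cdot\,|\,\zeta\geq u)$ independently of all the above. For the Laplace transform, the exponential formula for Poisson point processes delivers
\[\E_a\!\left[e^{-\gamma\kappa_u^{(a,-)}}\right]=e^{-\gamma u}\cdot\frac{c}{c+f+\int(1-e^{-\gamma t})\mu(dt)}.\]
A short rearrangement brings the denominator into the form $\int(1-e^{-\gamma t})\nu^{(a)}(dt)+\int_u^{+\infty}e^{-\gamma t}\nu_-^{(a)}(dt)$, and the Laplace-transform identities for $\nu_\pm^{(a)}$ from the introduction, combined with the definition of the Wronskian, give $\int(1-e^{-\gamma t})\nu^{(a)}(dt)=\omega_\gamma/(\Phi_{\gamma,-}(a)\Phi_{\gamma,+}(a))=1/G_\gamma(a,a)$, which is the stated form.

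For $X_{\kappa_u^{(a,-)}}$, its conditional law given $\{\kappa_u^{(a,-)}<+\infty\}$ is the marginal at time $u$ of $n_-(\cdot\,|\,\zeta\geq u)$. Using that the paper's normalization $\nu_-^{(a)}(\cdot)=\lim_{\varepsilon\downarrow 0}\Pb_{a-\varepsilon}(T_a\in\cdot)/(s(a)-s(a-\varepsilon))$ carries over to the full entrance law of $n_-$, one rewrites
\[\frac{n_-[\alpha(e(u))\,1_{\zeta\geq u}]}{\nu_-^{(a)}[u,+\infty]}=\lim_{\varepsilon\downarrow 0}\frac{\E_{a-\varepsilon}[\alpha(X_u)\,1_{T_a\geq u}]}{\Pb_{a-\varepsilon}(T_a\geq u)}=\E_a^{\downarrow,u}[\alpha(X_u)],\]
which is the desired second formula. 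The step I expect to be the main obstacle is precisely this identification of the entrance law of $n_-$ with the meander via the paper's compact Laplace-transform definition of $\nu_-^{(a)}$: it is classical Pitman--Yor material but has to be set up carefully to avoid normalization pitfalls, especially in the transient case with atoms at $+\infty$. A self-contained backup in the spirit of Lemma \ref{lem:1} would forgo excursion theory altogether and instead condition on whether $T_a>u$ when starting from $a\pm\varepsilon$, derive coupled identities for $\E_{a\pm\varepsilon}[\alpha(X_{\kappa_u^{(a,-)}})e^{-\gamma\kappa_u^{(a,-)}}1_{\kappa_u^{(a,-)}<+\infty}]$, and pass to the limit $\varepsilon\downarrow 0$ using the local asymptotics of Lemma \ref{lem:1}.
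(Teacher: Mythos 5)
Your proposal is correct but takes a genuinely different route from the paper. The paper's proof is elementary and self-contained: it decomposes $F_{\alpha,\gamma}(a) = \E_a[e^{-\gamma\kappa_u^{(a,-)}}\alpha(X_{\kappa_u^{(a,-)}})1_{\{\kappa_u^{(a,-)}<\infty\}}]$ by conditioning on the sign of $X_\varepsilon - a$ and on whether the excursion below $a$ straddling $\varepsilon$ exceeds length $u$, applies the Markov property at $d_\varepsilon^{(a)}$ to obtain a fixed-point equation for $F_{\alpha,\gamma}(a)$, and then extracts the result by the asymptotics of Lemma~\ref{lem:1} as $\varepsilon\downarrow 0$. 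Your approach, by contrast, goes through It\^o excursion theory: you colour the excursion point process into successful, terminating and regular types and invoke Poisson thinning and the exponential formula, an argument the paper only sketches in the Remark following the Proposition, and then only for the Laplace transform of $\kappa_u^{(a,-)}$ in the recurrent case. Two things are worth flagging. First, your classification (putting infinite negative excursions in the ``successful'' bin rather than the ``terminating'' one, so that the only terminating excursions are positive and infinite) is exactly what makes the thinning argument go through cleanly in the transient case, sidestepping the conditional-independence-given-$L_\infty^a$ issue the paper warns about when one works with the integrated subordinators of~(\ref{eq:n+n-}) instead of the marked PPP directly; this is a real improvement over the Remark's sketch. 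Second, a normalization slip: with the semimartingale local time used in the paper, the excursion measure's lifetime marginal is $s^\prime(a)\,\nu_\pm^{(a)}$, not $\nu_\pm^{(a)}$, so your rates $c$, $f$ and $\mu$ should all carry an extra factor $s^\prime(a)$; since it cancels in the ratio, your final formula is unaffected, and you do flag ``normalization pitfalls,'' but the constant should be tracked to keep the thinning rates honest. The remaining gap you single out, identifying the conditioned entrance law $n_-(\,\cdot\,\vert\,\zeta\geq u)$ at time $u$ with $\E_a^{\downarrow,u}$, is indeed the nontrivial step, is classical Pitman--Yor material, and the backup route you describe is essentially the paper's own proof, so the proposal as a whole is sound and complementary to the paper's argument.
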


\begin{proof}
We set
$$d_t^{(a)}=\inf\{s> t,\, X_s=a\}$$
and
$$F_{\alpha,\gamma}(a) = \E_a\left[e^{-\gamma \kappa_u^{(a,-)}}\alpha \big(X_{\kappa_u^{(a,-)}} \big)1_{\{\kappa_u^{(a,-)}<+\infty\}}\right].$$
Let $0<\varepsilon<u$. We shall decompose the expectation of $F_{\alpha,\gamma}(a)$ according as whether $X_{\varepsilon}$ is greater or smaller than $a$. When $X_{\varepsilon}>a$, applying the Markov property at the time $d_\varepsilon^{(a)}$, we obtain 
\begin{equation}
\label{eq:X>a}\E_a\left[e^{-\gamma \kappa_u^{(a,-)}}\alpha \big(X_{\kappa_u^{(a,-)}} \big)1_{\{\kappa_u^{(a,-)}<+\infty\}}1_{\{X_\varepsilon>a\}}\right]=\E_a\left[e^{-\gamma d_\varepsilon^{(a)}}1_{\{X_\varepsilon>a\}}\right]F_{\alpha,\gamma}(a).
\end{equation}
When $X_{\varepsilon}\leq a$, we need to  take care as whether or not the excursion below $a$ straddling $\varepsilon$ is longer than $u$. On the one hand, if this excursion is shorter than $u$, then
\begin{multline}
\label{eq:da<ga}
\E_a\left[e^{-\gamma \kappa_u^{(a,-)}}\alpha \big(X_{\kappa_u^{(a,-)}} \big)1_{\{\kappa_u^{(a,-)}<+\infty\}}1_{\{X_\varepsilon\leq a\}} 1_{\{d_\varepsilon^{(a)} < g_\varepsilon^{(a)}+u\}}\right]\\ = \E_a\left[e^{-\gamma d_\varepsilon^{(a)}}1_{\{X_\varepsilon\leq a\}}1_{\{d_\varepsilon^{(a)} < g_\varepsilon^{(a)}+u\}}\right]F_{\alpha,\gamma}(a).
\end{multline}
On the other hand, if this excursion is longer than $u$, then $\kappa_u^{(a,-)}  = g_\varepsilon^{(a)}+u$ a.s. and 
\begin{multline}
\label{eq:da>ga}\E_a\left[e^{-\gamma \kappa_u^{(a,-)}}\alpha \big(X_{\kappa_u^{(a,-)}} \big)1_{\{\kappa_u^{(a,-)}<+\infty\}}1_{\{X_\varepsilon\leq a\}} 1_{\{d_\varepsilon^{(a)} \geq  g_\varepsilon^{(a)}+u\}}\right] \\
 =\E_a\left[e^{-\gamma(g_\varepsilon^{(a)}+u)}  \alpha\big( X_{g_\varepsilon^{(a)}+u} \big)1_{\{X_\varepsilon\leq a\}} 1_{\{d_\varepsilon^{(a)} \geq  g_\varepsilon^{(a)}+u\}}\right] .
\end{multline}
Gathering (\ref{eq:X>a}), (\ref{eq:da<ga}) and (\ref{eq:da>ga}) together, we deduce that 
$$F_{\alpha,\gamma}(a) = \frac{\E_a\left[e^{-\gamma(g_\varepsilon^{(a)}+u)} \alpha \big( X_{g_\varepsilon^{(a)}+u} \big)1_{\{X_\varepsilon\leq a\}} 1_{\{d_\varepsilon^{(a)} \geq  g_\varepsilon^{(a)}+u\}}\right] }{1-\E_a\left[e^{-\gamma d_\varepsilon^{(a)}}1_{\{X_\varepsilon>a\}}\right]- \E_a\left[e^{-\gamma d_\varepsilon^{(a)}}1_{\{X_\varepsilon\leq a\}}1_{\{d_\varepsilon^{(a)} < g_\varepsilon^{(a)}+u\}}\right]}
$$
and it remains to let $\varepsilon\downarrow 0$. We start with the numerator. Fix $\delta \in (\varepsilon,u)$. Since $g_\varepsilon^{(a)}\leq \varepsilon$, we have 
\begin{align*}
&\E_a\left[e^{-\gamma g_\varepsilon^{(a)}} \alpha \big(X_{g_\varepsilon^{(a)}+u} \big)1_{\{X_\varepsilon\leq a\}} 1_{\{d_\varepsilon^{(a)} \geq  g_\varepsilon^{(a)}+u\}}\right] \\
&\qquad\qquad\qquad\qquad\qquad\leq \E_a\left[ \max\limits_{s\in[0,\delta]} \alpha\left( X_{s+u} \right)1_{\{X_\varepsilon\leq a\}} 1_{\{d_\varepsilon^{(a)} \geq  u\}}\right]\\
&\qquad\qquad\qquad\qquad\qquad= \E_a\left[  1_{\{X_\varepsilon\leq a\}}    \E_{X_\varepsilon}\left[\max\limits_{s\in[0,\delta]} \alpha\left( X_{s+u-\varepsilon} \right)  1_{\{\varepsilon+T_a \geq  u\}}\right]\right]\\
&\qquad\qquad\qquad\qquad\qquad\leq \E_a\left[  1_{\{X_\varepsilon\leq a\}}    \E_{X_\varepsilon}\left[   \E_{X_{u-\delta}}\left[\max\limits_{s\in[0,\delta]} \alpha\left( X_{s} \right)\right]   1_{\{T_a \geq  u-\delta\}}\right]\right].
\end{align*}
From Lemma \ref{lem:1} and Formula (\ref{eq:meander}) we deduce that :
\begin{multline*}
 \E_a\left[  1_{\{X_\varepsilon\leq a\}}    \E_{X_\varepsilon}\left[   \E_{X_{u-\delta}}\left[\max\limits_{s\in[0,\delta]} \alpha\left( X_{s} \right)\right]   1_{\{T_a \geq  u-\delta\}}\right]\right]\\
 \; \equi_{\varepsilon \downarrow 0}\;  \E_{a}^{\downarrow, u-\delta}\left[   \E_{X_{u-\delta}}\left[\max\limits_{s\in[0,\delta]} \alpha\left( X_{s} \right)\right] \right]\nu^{(a)}_-[u-\delta,+\infty]
  \int_0^{\varepsilon} p(t; a,a) dt. 
\end{multline*}
As a consequence, we obtain the upper bound  :
$$
\limsup_{\varepsilon\downarrow0} \frac{\E_a\left[e^{-\gamma g_\varepsilon^{(a)}} \alpha \big(X_{g_\varepsilon^{(a)}+u} \big)1_{\{X_\varepsilon\leq a\}} 1_{\{d_\varepsilon^{(a)} \geq  g_\varepsilon^{(a)}+u\}}\right]}{  \nu^{(a)}_-[u-\delta,+\infty] \int_0^{\varepsilon} p(t; a,a) dt} \\
\leq \E_a^{\downarrow,u-\delta}\left[\E_{X_{u-\delta}}\left[\max\limits_{s\in[0,\delta]} \alpha\left( X_{s} \right)\right] \right].
$$
Similarly, we have the lower bound :
$$
\liminf_{\varepsilon\downarrow0} \frac{\E_a\left[e^{-\gamma g_\varepsilon^{(a)}} \alpha \big(X_{g_\varepsilon^{(a)}+u} \big)1_{\{X_\varepsilon\leq a\}} 1_{\{d_\varepsilon^{(a)} \geq  g_\varepsilon^{(a)}+u\}}\right]}{ \nu^{(a)}_-[u,+\infty]  \int_0^{\varepsilon} p(t; a,a) dt} \\\geq e^{-\lambda \delta} \E_a^{\downarrow,u}\left[\E_{X_u}\left[\min\limits_{s\in[0,\delta]} \alpha\left( X_{s} \right)\right] \right].
$$
Letting $\delta\downarrow 0$, we deduce by continuity  that the numerator is equivalent to :
$$
 \E_a\left[e^{-\gamma g_\varepsilon^{(a)}} \alpha \big(X_{g_\varepsilon^{(a)}+u} \big)1_{\{X_\varepsilon\leq a\}} 1_{\{d_\varepsilon^{(a)} \geq  g_\varepsilon^{(a)}+u\}}\right]
\; \equi_{\varepsilon \downarrow 0}\;
 \E_a^{\downarrow,u}\left[\alpha\left( X_{u} \right) \right] \nu^{(a)}_-[u,+\infty]  \int_0^{\varepsilon} p(t; a,a) dt.
$$

\noindent
Looking now at the denominator, we obtain, adding and subtracting indicators functions :
\begin{multline*}
 \frac{1}{s^\prime(a)}\left( \frac{\text{d}}{\text{d}x}\E_{x}\left[e^{-\gamma T_a }1_{\{T_a \leq  u\}}\right]_{\big|x=a^-}  -  \frac{\text{d}}{\text{d}x}\E_{x}\left[e^{-\gamma T_a }\right]_{\big|x=a^+}\right) \int_0^{\varepsilon} p(t; a,a) dt. \\
 = \left(\frac{1}{G_\gamma(a,a)} +\int_u^{+\infty} e^{-\gamma t} \nu_-^{(a)}(dt)    \right) \int_0^{\varepsilon} p(t; a,a) dt.
 \end{multline*}
Gathering the two previous asymptotics, we finally obtain
$$\E_a\left[e^{-\gamma \kappa_u^{(a,-)}}\alpha \big(X_{\kappa_u^{(a,-)}} \big)1_{\{\kappa_u^{(a,-)}<+\infty\}}\right] = e^{-\gamma u}\frac{  \nu^{(a)}_-[u,+\infty] }{\frac{1}{G_\gamma(a,a)} +\int_u^{+\infty} e^{-\gamma t} \nu_-^{(a)}(dt) }\E_a^{\downarrow,u}\left[\alpha \left(X_{u} \right)\right] $$
which ends the proof of Proposition \ref{prop:1}.
\end{proof}

\begin{remark}
We briefly sketch here another proof for the distribution of $\kappa_u^{(a,-)}$ which is inspired from Getoor \cite{Ge}. Assume that $X$ is recurrent. The right-continuous inverse $(\tau_\ell^{a}, \, \ell\geq0)$ of $L^{a}$ is a subordinator with Laplace transform :
$$\E_a\left[e^{-\gamma \tau_\ell^a}\right] = \exp\left(- \ell\frac{s^\prime(a)}{G_\gamma(a,a)}\right) = \exp\left(-\ell s^\prime(a) \int_0^{+\infty} (1-e^{-\gamma t})\nu^{(a)}(dt)\right).$$
where $\nu^{(a)} = \nu^{(a)}_- + \nu^{(a)}_+$.  Furthermore the two processes
\begin{equation}\label{eq:n+n-}
\left(\int_0^{\tau_\ell} 1_{\{X_u>a\}} du, \ell\geq0\right) \quad\text{ and }\quad \left(\int_0^{\tau_\ell} 1_{\{X_u<a\}} du, \ell\geq0\right)
\end{equation}
are independent subordinators with respective L\'evy measures given by $s^\prime(a)\nu^{(a)}_+$ and $s^\prime(a)\nu^{(a)}_-$. As a consequence, we may decompose $\tau^{(a)}$ into two independent subordinators
$$\tau^{(a)} = \eta^{(0)} + \eta^{(1)}$$
with respective L\'evy measures $s^\prime(a)(\nu^{(a)}_+(dt) + 1_{\{t< u\}}\nu^{(a)}_-(dt))$ and $s^\prime(a)1_{\{t\geq  u\}}\nu^{(a)}_-(dt)$.
Now
$\kappa_u^{(a,-)}$ is the starting point of the first excursion below $a$ of length greater than $u$. 
Define the first jump of  $ \eta^{(1)}$ by
$$ R= \inf\{\ell>0, \; \eta^{(1)}_\ell\neq0\}.$$
The random variable $R$ is exponentially distributed with parameter $s^\prime(a)\nu^{(a,-)}[u,+\infty]$. Then
\begin{align*}
\E_a\left[e^{-\gamma (\kappa_u^{(a,-)}-u)}\right] &= \E\left[e^{-\gamma \eta^{(0)}_{R-}}\right]\\
&=s^\prime(a)\nu^{(a)}_-[u,+\infty]
 \int_0^{+\infty} \E\left[e^{-\gamma \eta^{(0)}_{r-}}\right] 
e^{- s^\prime(a)\nu^{(a)}_-[u,+\infty] r} dr\\
 &=\frac{\nu^{(a)}_-[u,+\infty]}{\int_0^{+\infty}(1-e^{-\gamma t})\nu^{(a)}_+(dt)+ \int_0^{u}(1-e^{-\gamma t})\nu^{(a)}_-(dt)+ 
 \nu^{(a)}_-[u,+\infty]}\\
 &=\frac{\nu^{(a)}_-[u,+\infty]}{\frac{1}{G_\gamma(a,a)} +\int_u^{+\infty} e^{-\gamma t} \nu_-^{(a)}(dt)},
\end{align*}
which yields the Laplace transform of $\kappa_u^{(a,-)}$  when $X$ is recurrent. However, when $X$ is transient, the two subordinators given in Formula (\ref{eq:n+n-}) are no longer independent as explained in \cite{PY}. They are only conditionally independent given $L_{\infty}^{a}$, since when $\ell = L_{\infty}^{a}$, one will be stopped at the time $L_{\infty}^{a}$ while the other one will jump to $+\infty$. As a consequence, in the transient case, one would need to condition by $L_{\infty}^{a}$ before mimicking the previous argument.
\end{remark}

\section{Proof of Theorem \ref{theo:1}}\label{sec:3}

The proof of Theorem \ref{theo:1} is similar to the proof of Proposition \ref{prop:1}, except that we shall work with two boundaries $a<b$, and thus write down a system of equations involving $F_{\alpha, \beta, \gamma,\lambda }(a)$ and $F_{\alpha, \beta, \gamma,\lambda }(b)$.
Let $0<\varepsilon< \min(u,v)$. To compute $F_{\alpha, \beta, \gamma,\lambda }(a)$ and $F_{\alpha, \beta, \gamma,\lambda }(b)$, we shall decompose the expectation in three terms, according to the position of $X_\varepsilon$ with respect to $a$ and $b$. 

\subsection{Study of $F_{\alpha, \beta, \gamma,\lambda }(a)$}
First, if $X_\varepsilon\leq a$, then the excursion below $a$ straddling $\varepsilon$ must remain shorter than $u$. Applying the Markov property at the time $d_\varepsilon^{(a)}$, this yields the formula
\begin{multline}
\E_a\left[e^{-\gamma \kappa_u^{(a,-)}- \lambda \kappa_v^{(b,+)}}   \alpha \big(X_{ \kappa_u^{(a,-)}}\big) \beta \big(X_{ \kappa_v^{(b,+)}}\big)1_{\{\kappa_v^{(b,+)}\leq \kappa_u^{(a,-)}<+\infty\}}1_{\{X_\varepsilon\leq a\}}\right]\\
\label{Faa}= \E_a\left[e^{-(\gamma+\lambda) d_\varepsilon^{(a)}}1_{\{d_\varepsilon^{(a)}< g_\varepsilon^{(a)}+u\}}1_{\{X_\varepsilon\leq a\}}\right]F_{\alpha, \beta, \gamma,\lambda }(a).
\end{multline}

\noindent
Next, if $a<X_\varepsilon<b$, we obtain, applying the Markov property at the times $d_\varepsilon^{(a)}$ and $d_\varepsilon^{(b)}$ :
\begin{multline}
\label{Faab}\E_a\left[e^{-\gamma \kappa_u^{(a,-)}- \lambda \kappa_v^{(b,+)} } \alpha \big(X_{ \kappa_u^{(a,-)}}\big) \beta \big(X_{ \kappa_v^{(b,+)}}\big)1_{\{\kappa_v^{(b,+)}\leq \kappa_u^{(a,-)}<+\infty\}}1_{\{a<X_\varepsilon< b\}}\right]\\=\E_a\left[e^{-(\gamma+\lambda) d_\varepsilon^{(b)}}1_{\{a<X_\varepsilon< b\}}1_{\{d_\varepsilon^{(b)}<d_\varepsilon^{(a)}\}}\right]F_{\alpha, \beta, \gamma,\lambda }(b)\\ +\E_a\left[e^{-(\gamma+\lambda) d_\varepsilon^{(a)}}1_{\{a<X_\varepsilon< b\}}1_{\{d_\varepsilon^{(a)}<d_\varepsilon^{(b)}\}}\right]F_{\alpha, \beta, \gamma,\lambda }(a).
\end{multline}

\noindent
The third and last term will have no contribution in the result :
\begin{equation}
\label{eq:Deltaa}\Delta_a(\varepsilon)=\E_a\left[e^{-\gamma \kappa_u^{(a,-)} - \lambda \kappa_v^{(b,+)}} \alpha \big(X_{ \kappa_u^{(a,-)}}\big) \beta \big(X_{ \kappa_v^{(b,+)}}\big)1_{\{\kappa_v^{(b,+)}\leq \kappa_u^{(a,-)}<+\infty\}}1_{\{X_\varepsilon\geq b\}}\right].
\end{equation}
\noindent
As a consequence, gathering (\ref{Faa}), (\ref{Faab}) and (\ref{eq:Deltaa}), we deduce that :
$$F_{\alpha, \beta, \gamma,\lambda }(a) = \frac{\E_a\left[e^{-(\gamma+\lambda) d_\varepsilon^{(b)}}1_{\{a<X_\varepsilon< b\}}1_{\{d_\varepsilon^{(b)}<d_\varepsilon^{(a)}\}}\right]F_{\alpha, \beta, \gamma,\lambda }(b)+\Delta_a(\varepsilon)}{1- \E_a\left[e^{-(\gamma+\lambda) d_\varepsilon^{(a)}}1_{\{d_\varepsilon^{(a)}< g_\varepsilon^{(a)}+u\}}1_{\{X_\varepsilon\leq a\}}\right]- \E_a\left[e^{-(\gamma+\lambda) d_\varepsilon^{(a)}}1_{\{a<X_\varepsilon< b\}}1_{\{d_\varepsilon^{(a)}<d_\varepsilon^{(b)}\}}\right]}$$
and it remains to let $\varepsilon\downarrow0$. Applying the Markov property and Lemma \ref{lem:1}, we deduce that the first term in the numerator is equivalent to :
\begin{align*}
\E_a\left[e^{-(\gamma+\lambda) d_\varepsilon^{(b)}}1_{\{a<X_\varepsilon< b\}}1_{\{d_\varepsilon^{(b)}<d_\varepsilon^{(a)}\}}\right]
&=\E_a\left[1_{\{a<X_\varepsilon< b\}}  \E_{X_\varepsilon}\left[ e^{-(\gamma+\lambda) T_b}1_{\{T_b<T_a\}}\right]\right]\\
&\equi_{\varepsilon\downarrow 0} \frac{1}{s^\prime(a)} \frac{\text{d}}{\text{d}x} \E_{x}\left[ e^{-(\gamma+\lambda) T_b}1_{\{T_b<T_a\}}\right]_{\big|x=a^+}  \times\int_0^{\varepsilon}p(t;a,a) dt.
\end{align*}

\noindent
Similarly,  the denominator is equivalent to :
$$
\frac{1}{s^\prime(a)}\left( \frac{\text{d}}{\text{d}x} \E_{x}\left[ e^{-(\gamma+\lambda) T_a}1_{\{T_a<u\}}\right]_{\big|x=a^-}  -  \frac{\text{d}}{\text{d}x} \E_{x}\left[e^{-(\gamma+\lambda) T_a}1_{\{T_a<T_b\}}\right]_{\big|x=a^+}\right)     \times\int_0^{\varepsilon}p(t; a,a) dt.
$$
Finally, from (\ref{eq:e/p}) and (\ref{eq:KW}), the last term is negligible since
$$\frac{\Delta_a(\varepsilon)}{\int_0^{\varepsilon} p(t; a,a ) dt} \leq  \max_{z\in \R}(\alpha(z)\beta(z)) \frac{\Pb_a(T_b\leq \varepsilon)}{\int_0^{\varepsilon} p(t; a,a ) dt} \xrightarrow[\varepsilon\downarrow0]{} 0$$
and we obtain, after some simplifications, 
\begin{align}
\notag F_{\alpha, \beta, \gamma,\lambda }(a) &= \frac{ \frac{\text{d}}{\text{d}x} \E_{x}\left[ e^{-(\gamma+\lambda) T_b}1_{\{T_b<T_a\}}\right]_{\big|x=a^+}  }{  \frac{\text{d}}{\text{d}x} \E_{x}\left[ e^{-(\gamma+\lambda) T_a}1_{\{T_a<u\}}\right]_{\big|x=a^-}  -  \frac{\text{d}}{\text{d}x} \E_{x}\left[e^{-(\gamma+\lambda) T_a}1_{\{T_a<T_b\}}\right]_{\big|x=a^+} }F_{\alpha, \beta, \gamma,\lambda }(b)\\
\notag &=\frac{ \frac{W_{\gamma+\lambda}^{(a)\prime}(a)}{W_{\gamma+\lambda}^{(a)}(b)}}{
     \frac{\Phi_{\gamma+\lambda, -}^{\prime}(a)}{\Phi_{\gamma+\lambda, -}(a)} +s^\prime(a)\int_u^{+\infty}e^{-(\gamma+\lambda) t}\nu^{(a)}_-(dt)-\frac{W_{\gamma+\lambda}^{(b)\prime}(a)}{W_{\gamma+\lambda}^{(b)}(a)}
}F_{\alpha, \beta, \gamma,\lambda }(b)  \\
\label{eq:Fa=Fb}&= \frac{1}{\Psi^{(-)}_{\gamma+\lambda}(b,a,u)}F_{\alpha, \beta, \gamma,\lambda }(b).
\end{align}

\begin{remark}
By identification, the Laplace transform of $T_b$ on the event $\{T_b < \kappa_u^{(a,-)}\}$ is given by  
$$\E_a\left[e^{-\lambda T_b}  1_{\{T_b < \kappa_u^{(a,-)}\}}\right] =  \frac{1}{\Psi^{(-)}_{\lambda}(b,a,u)}.$$
\end{remark}

\subsection{Study of $F_{\alpha, \beta, \gamma,\lambda }(b)$}

\noindent
First, if $X_\varepsilon>b$, we need to take care as whether or not the excursion above $b$ straddling $\varepsilon$ is longer than $v$. On the one hand, if this excursion is longer than $v$, then $\kappa_{v}^{(b,+)} = g_{\varepsilon}^{(b)}+v$, hence, applying as before the Markov property at the time $d_\varepsilon^{(b)}$~: 
\begin{align*}
&\E_b\left[e^{-\gamma \kappa_u^{(a,-)}- \lambda \kappa_v^{(b,+)}} \alpha \big(X_{ \kappa_u^{(a,-)}}\big) \beta \big(X_{ \kappa_v^{(b,+)}}\big)1_{\{\kappa_v^{(b,+)}\leq \kappa_u^{(a,-)}<+\infty\}}1_{\{X_\varepsilon\geq b\}} 1_{\{d_\varepsilon^{(b)}\geq v + g_\varepsilon^{(b)}\}}  \right] \\
&=\E_b\left[e^{ -\gamma d_\varepsilon^{(b)}-\lambda (g_\varepsilon^{(b)}+v)} \beta \big(X_{g_\varepsilon^{(b)}+v}   \big)   1_{\{X_\varepsilon\geq b\}} 1_{\{d_\varepsilon^{(b)}\geq v + g_\varepsilon^{(b)}\}}  \right]  \E_b\left[e^{-\gamma \kappa_u^{(a,-)}}\alpha\big( X_{ \kappa_u^{(a,-)}} \big)1_{\{ \kappa_u^{(a,-)}<+\infty\}} \right] .
\end{align*}
On the other hand, if this excursion is shorter than $v$, then 
\begin{multline*}
\E_b\left[e^{-\gamma \kappa_u^{(a,-)}- \lambda \kappa_v^{(b,+)} } \alpha \big(X_{ \kappa_u^{(a,-)}}\big) \beta \big(X_{ \kappa_v^{(b,+)}}\big)1_{\{\kappa_v^{(b,+)}\leq \kappa_u^{(a,-)}<+\infty\}}1_{\{X_\varepsilon\geq b\}} 1_{\{d_\varepsilon^{(b)}< v + g_\varepsilon^{(b)}\}}  \right] \\
=\E_b\left[   e^{-(\gamma+\lambda)d_\varepsilon^{(b)}} 1_{\{X_\varepsilon\geq b\}} 1_{\{d_\varepsilon^{(b)}<v + g_\varepsilon^{(b)}\}}  \right] F_{\alpha, \beta, \gamma,\lambda }(b).
\end{multline*}

\noindent
Next, if $a<X_\varepsilon<b$, we obtain, applying the Markov property at the times $d_\varepsilon^{(a)}$ and $d_\varepsilon^{(b)}$ :
\begin{multline}
\label{Fab}\E_b\left[e^{- \lambda \kappa_v^{(b,+)}-\gamma \kappa_u^{(a,-)} } \alpha \big(X_{ \kappa_u^{(a,-)}}\big) \beta \big(X_{ \kappa_v^{(b,+)}}\big)1_{\{\kappa_v^{(b,+)}\leq \kappa_u^{(a,-)}<+\infty\}}1_{\{a<X_\varepsilon< b\}}\right]\\=\E_b\left[e^{-(\gamma+\lambda) d_\varepsilon^{(b)}}1_{\{a<X_\varepsilon< b\}}1_{\{d_\varepsilon^{(b)}<d_\varepsilon^{(a)}\}}\right]F_{\alpha, \beta, \gamma,\lambda }(b) \\+\E_b\left[e^{-(\gamma+\lambda) d_\varepsilon^{(a)}}1_{\{a<X_\varepsilon< b\}}1_{\{d_\varepsilon^{(a)}<d_\varepsilon^{(b)}\}}\right]F_{\alpha, \beta, \gamma,\lambda }(a).
\end{multline}

\noindent
Finally, the last term when $X_\varepsilon\leq a$ is again negligible :
$$\Delta_b(\varepsilon)=\E_b\left[e^{-\gamma \kappa_u^{(a,-)}- \lambda \kappa_v^{(b,+)} } \alpha \big(X_{ \kappa_u^{(a,-)}}\big) \beta \big(X_{ \kappa_v^{(b,+)}}\big)1_{\{\kappa_v^{(b,+)}\leq \kappa_u^{(a,-)}<+\infty\}}1_{\{X_\varepsilon\leq a\}}\right].$$

\noindent
Passing to the limit as $\varepsilon\downarrow0$ and proceeding as in Proposition \ref{prop:1}, we deduce that 
\begin{multline*}
\Psi^{(+)}_{\gamma+\lambda}(a,b,v)F_{\alpha, \beta, \gamma,\lambda }(b)  - F_{\alpha, \beta, \gamma,\lambda }(a)\\
= e^{-\lambda v-\gamma v}\frac{W_{\gamma+\lambda}^{(b)}(a)}{\omega_{\gamma+\lambda}}  \E_b\left[e^{-\gamma \kappa_u^{(a,-)}} \alpha\big( X_{ \kappa_u^{(a,-)}} \big)1_{\{ \kappa_u^{(a,-)}<+\infty\}} \right]\E_{b}^{\uparrow,v}\left[    \frac{\Phi_{\gamma,+}(X_v)}{\Phi_{\gamma,+}(b)}\beta\left( X_v\right)\right]\nu^{(b)}_+[v,+\infty]
\end{multline*}
and the expression for $F_{\alpha, \beta, \gamma,\lambda }(b)$ follows by plugging (\ref{eq:Fa=Fb}) in this last formula. Finally, to get the general expression of Theorem \ref{theo:1} for $a\leq x \leq b$, one simply need to apply the Markov property
$$F_{\alpha, \beta, \gamma,\lambda }(x) = \E_x\left[e^{-(\gamma+\lambda) T_a }1_{\{T_a<T_b\}}\right]F_{\alpha, \beta, \gamma,\lambda }(a) +  \E_x\left[e^{-(\gamma+\lambda) T_b }1_{\{T_b<T_a\}}\right]F_{\alpha, \beta, \gamma,\lambda }(b)
$$
and then use (\ref{eq:Fa=Fb}).
 \qed

\section{Applications}\label{sec:4}

We briefly give a few examples where the expressions for the L\'evy measures $\nu^{(a)}_-$ and $\nu^{(b)}_+$ may be written down explicitly.

\subsection{Brownian motion with drift $\mu$}

Assume that $X$ is a Brownian motion with drift $\mu$. 
Its scale function may be chosen such that  :
$$\displaystyle s^\prime(x) = e^{-2\mu x}.$$
The two eigenfunctions $\Phi_{\lambda,+}$ and $\Phi_{\lambda,-}$ are given by :
$$\Phi_{\lambda,+}(x) = \exp\left(- x \left(\sqrt{2\lambda +\mu^2}+\mu\right)\right) \quad\text{ and }\quad 
\Phi_{\lambda,-}(x) =\exp\left( x \left(\sqrt{2\lambda +\mu^2}-\mu\right)\right)$$
and
$$\Pb_x(T_a\in dt) = \frac{|a-x|}{\sqrt{2\pi} t^{3/2}}\exp\left(-\frac{(a-x-\mu t)^2}{2t}\right) dt.$$
As a consequence, the L\'evy measures $\nu^{(a)}_\pm$ only depend on $a$ through the scale function $s$ :
$$\nu^{(a)}_\pm(dt) = e^{2\mu a}  \frac{1}{\sqrt{2\pi} t^{3/2}} e^{-\frac{\mu^2}{2}t }dt,$$
and
$$\nu^{(a)}_\pm(\{+\infty\}) = e^{2\mu a} \left(|\mu| \pm \mu \right).$$
When $\mu=0$, one may choose $s(x)=x$ and applying Remark \ref{rem:rec}, we recover for instance the formula for a standard Brownian motion,  i.e. for $a\leq x\leq b$   (see  \cite{DW2}) :
$$\Pb_x\left(\kappa_v^{(b,+)} \leq \kappa_u^{(a,-)}\right) =   \frac{\sqrt{u} + (x-a)\sqrt{\frac{2}{\pi}}}{\sqrt{v}+\sqrt{u}+ (b-a)\sqrt{\frac{2}{\pi}}}.$$

\subsection{Three-dimensional Bessel process with drift $\mu>0$ in the wide sense}

Let us consider the solution of the following SDE :
$$X_t = x + B_t + \mu \int_0^{t} \frac{\cosh(\mu X_s)}{\sinh(\mu X_s)}ds$$
where $x>0$ and $B$ is a standard Brownian motion. The law of $X$ is that of a Brownian motion with drift $\mu>0$ conditioned to stay positive, or alternatively that of a three-dimensional Bessel process with drift $\mu$ in the wide sense, see Pitman \& Yor \cite{PY2} or Watanabe \cite{Wa}. Its scale function may be chosen such that 
$$s^\prime(x) = \frac{\mu^2}{\sinh^2(\mu x)}$$
and the two eigenfunctions are given by 
$$\Phi_{\lambda,+}(x)=\mu \frac{ \exp\left(-x\sqrt{2\lambda+\mu^2}\right)}{\sinh(\mu x)} \quad \text{ and }\quad    \Phi_{\lambda,-}(x) = \mu \frac{\sinh(x\sqrt{2\lambda+\mu^2})}{\sinh(\mu x)}. $$
As a consequence, we deduce from \cite[p.258]{Erd} that  
$$\Pb_x(T_a\in dt)/dt=
\begin{cases}
 \displaystyle \frac{\sinh(\mu a )}{\sinh(\mu x)} \frac{1}{\sqrt{2\pi} t^{3/2}} \sum_{n=-\infty}^{+\infty} ((2n+1)a -x) e^{-\frac{\left((2n+1)a-x\right)^2}{2t} -\frac{\mu^2}{2}t }& \quad \text{if } x< a,\\
& \\
&\\
\displaystyle \frac{\sinh(\mu a )}{\sinh(\mu x)} \frac{(x-a)}{\sqrt{2\pi} t^{3/2}} \exp\left(-\frac{(x-a)^2}{2t}-\frac{\mu^2}{2}t\right)  &  \quad \text{if } x> a,
\end{cases}
$$
hence  we obtain
$$
\nu^{(a)}_-(dt) = \frac{\sinh^2(\mu a)}{\mu^2} \frac{1}{\sqrt{2\pi}t^{3/2}}e^{-\frac{\mu^2}{2}t} \sum_{n=-\infty}^{+\infty} \left(\frac{\mu\cosh(\mu a)}{\sinh(\mu a)}  2na + 1 - \frac{4n^2a^2}{t} \right) \, e^{-\frac{2n^2 a^2}{t}}dt$$
and
$$\nu^{(b)}_+(dt) =  \frac{\sinh^2(\mu b)}{\mu^2} \frac{1}{\sqrt{2\pi}t^{3/2}}e^{-\frac{\mu^2}{2}t}dt.$$
Note that the measure $\nu^{(b)}_+$ admits an atom at $+\infty$ which is given by :
$$\nu^{(b)}_+(\{+\infty\}) = \frac{e^{2\mu b}-1}{2\mu}.$$
As a consequence, we deduce for instance that :
$$\Pb_a\left(\kappa_u^{(a,-)}<+\infty\right) = \frac{\nu_-^{(a)}[u,+\infty)}{ \frac{e^{2\mu a}-1}{2\mu}  +\nu_-^{(a)}[u,+\infty) }.$$

\subsection{Reflected Brownian motion}
Assume that $X=|B|$ with $B$ a standard Brownian motion. The two eigenfunctions $\Phi_{\lambda,+}$ and $\Phi_{\lambda,-}$ are given by :
$$\Phi_{\lambda,+}(x) = \exp\left(- x \sqrt{2\lambda}\right) \quad\text{ and }\quad 
\Phi_{\lambda,-}(x) =\cosh\left( x\sqrt{2\lambda }\right)$$
and from Borodin \& Salminen \cite{BS}, the distribution of $T_a$ admits the density
$$\Pb_x(T_a\in dt)/dt=
\begin{cases}
 \displaystyle  \frac{1}{\sqrt{2\pi} t^{3/2}} \sum_{n=-\infty}^{+\infty} (-1)^n ((2n+1)a +x) e^{-\frac{\left((2n+1)a+x\right)^2}{2t} }& \quad \text{if } x< a,\\
& \\
&\\
\displaystyle \frac{(x-a)}{\sqrt{2\pi} t^{3/2}} \exp\left(-\frac{(x-a)^2}{2t}\right)  &  \quad \text{if } x> a,
\end{cases}
$$
Therefore, we deduce that 
$$
\nu^{(a)}_-(dt) =  \frac{1}{\sqrt{2\pi}t^{3/2}} \sum_{n=-\infty}^{+\infty}(-1)^{n+1} \left( \frac{4 a^2 n^2}{t}-1\right) \, e^{-\frac{2a^2 n^2}{t}}dt$$
and
$$\nu^{(b)}_+(dt) = \frac{1}{\sqrt{2\pi}t^{3/2}}dt.$$

\addcontentsline{toc}{section}{References}
%\nocite{*}
%\bibliographystyle{alpha}
%\bibliography{Biblio}

\end{document}